\numberwithin{equation}{section}
  \newtheorem{THM}{Theorem}[section]
  \newtheorem{LEM}[THM]{Lemma}
  \newtheorem{PROP}[THM]{Proposition}
  \newtheorem{COR}[THM]{Corollary}
\renewcommand{\le}{\leqslant}
\renewcommand{\ge}{\geqslant}
\newcommand{\0}{\varnothing}
\renewcommand{\sec}{\cap}
\renewcommand{\phi}{\varphi}
\renewcommand{\epsilon}{\varepsilon}
\newcommand{\UNION}{\bigcup}
\newcommand{\AAA}{\mathbf{A}}
\newcommand{\CC}{\mathbf{C}}
\newcommand{\CCfin}{\CC_{\mathit{fin}}}
\newcommand{\DDfin}{\DD_{\mathit{fin}}}
\newcommand{\DD}{\mathbf{D}}
\newcommand{\KK}{\mathbf{K}}
\newcommand{\NN}{\mathbb{N}}
\renewcommand{\SS}{\mathbf{S}}
\newcommand{\restr}[2]{\hbox{$#1$}\hbox{$\upharpoonright$}_{#2}}
\newcommand{\Boxed}[1]{\mbox{$#1$}}
\newcommand{\id}{\mathrm{id}}
\newcommand{\ID}{\mathrm{ID}}
\newcommand{\Ob}{\mathrm{Ob}}
\newcommand{\Const}{\mathrm{Const}}
\newcommand{\op}{\mathrm{op}}
\newcommand{\Age}{\mathrm{Age}}
\newcommand{\iso}{\mathrm{iso}}
\newcommand\toCC{\overset{\CC}\longrightarrow}
\newcommand\toSS{\overset{\SS}\longrightarrow}
\newcommand\toCCC{\overset{\CC^*}\longrightarrow}
\newcommand{\calA}{\mathcal{A}}
\newcommand{\calB}{\mathcal{B}}
\newcommand{\calC}{\mathcal{C}}
\newcommand{\calD}{\mathcal{D}}
\newcommand{\calS}{\mathcal{S}}
\newcommand{\Fraisse}{Fra\"\i ss\'e}
\newcommand{\Aut}{\mathrm{Aut}}
\title{From Ramsey degrees to Ramsey expansions via weak amalgamation}
\author{%
  Dragan Ma\v sulovi\'c\\
  Department of Mathematics and Informatics\\
  Faculty of Sciences, University of Novi Sad, Serbia\\
  Trg Dositeja Obradovi\'ca 3, 21000 Novi Sad, Serbia\\
  \texttt{dragan.masulovic@dmi.uns.ac.rs}
\and
  Andy Zucker\\
  Department of Pure Mathematics\\
  University of Waterloo\\
  200 University Ave W, Waterloo, ON  N2L 3G1, Canada\\
  \texttt{a3zucker@uwaterloo.ca}
}
\begin{document}
\maketitle

\begin{abstract}
  Under no additional assumptions, in this paper we construct a Ramsey expansion
  for every category of finite objects with finite small Ramsey degrees.
  Our construction is based on the relationship between small Ramsey degrees, weak amalgamation and recent results about
  weak \Fraisse\ categories. Namely, generalizing the fact that every Ramsey class has amalgamation, we show that
  classes with finite Ramsey degrees have weak amalgamation. We then invoke
  the machinery of weak \Fraisse\ categories to perform the construction.
  This improves previous similar results where an analogous construction was
  carried out under the assumption that everything sits comfortably in a bigger class with enough infrastructure,
  and that in this wider context there is an ultrahomogeneous structure under whose umbrella the construction takes place.

  \medskip

  \noindent
  \textbf{Key words and phrases:} Ramsey degrees, Ramsey expansion, weak amalgamation

  \medskip

  \noindent
  \textbf{MSC (2020):} 05C55, 18A35
\end{abstract}

\section{Introduction}

The intimate connection between the amalgamation property and the Ramsey property has been evident from the
very beginnings of Structural Ramsey Theory in early 1970's: amalgamation lies at the heart of the \emph{partite method},
one of the most powerful tools for proving the Ramsey property for a class of finite structures
(see, e.g.\ the proof of the famous Ne\v set\v ril-R\"odl Theorem~\cite{Nesetril-Rodl,Nesetril-Rodl-1989}).
On the other hand, it is an easy by fundamental result of Ne\v set\v ril and R\"odl from 1977
that every class of structures with the Ramsey property has the amalgamation property~\cite{Nesetril-Rodl}.

Many natural classes of finite structures with the amalgamation property (such as finite graphs and finite partially
ordered sets) do not enjoy the Ramsey property. It is quite common, though, that the structures can be expanded by
a few carefully chosen relations so that the resulting class of expanded structures has the Ramsey property.
We then say that the class of structures has a \emph{Ramsey expansion}.

In the late 1990s it was observed that many concrete classes of finite structures where a Ramsey expansion had been identified
also enjoyed a weaker property of having \emph{finite Ramsey degrees}~\cite{fouche97,fouche98,fouche99}. For \Fraisse\ classes,
that this is not a mere coincidence was proven in one direction in \cite{KPT}, who prove that classes with a Ramsey expansion have finite Ramsey degrees, and in the other by the second author in~\cite{zucker1}, showing that small Ramsey degrees suffice for the existence of a Ramsey expansion.
A more combinatorial proof of the same fact
can be found in~\cite{vanthe-finramdeg}, and a reinterpretation of the latter proof in the language of
category theory in~\cite{masul-kpt}. All these proofs make key use of the fact that the classes are \Fraisse, in the sense
that everything sits comfortably in a bigger class with enough infrastructure,
and that in this wider context there is an ultrahomogeneous structure under whose umbrella the construction takes place.

In this paper we show that the assumption about the ambient class in which an ultrahomogeneous object oversees the
construction is unnecessary. Imposing no additional assumptions, for each category of finite objects with finite small
Ramsey degrees (the definitions are given below) we construct a Ramsey expansion. The expansion is not constructed from
scratch, though. Section~\ref{srd-wap.sec.weak-fraisse-cats} starts with our first insight,
where we show (Theorem~\ref{srd-wap.thm.srd=>wap}) that classes with finite Ramsey degrees
have the weak amalgamation property. This is an analogue of Ne\v set\v ril and R\"odl's 
result~\cite{Nesetril-Rodl} that the Ramsey property implies amalgamation. In the remainder of the section we recall
from~\cite{kubis-weak-fraisse-cat,kubis-fraisse-seq} how an ambient category and a
weakly homogeneous object in it can be constructed from a category with weak amalgamation. 
In Section~\ref{srd-wap.sec.weak-hom-precomp-exp} we then upgrade the results from~\cite{masul-kpt} to show that
if everything sits in a bigger category in which there is a weakly homogeneous and locally finite object universal
for the category we are trying to expand, then there is a convenient expansion which can be trimmed down to a Ramsey expansion.
Finally, in Section~\ref{srd-wap.sec.main} we put all the ingredients together to prove
the main result of the paper. As a corollary, we specialize the main result of the paper to
arbitrary first-order structures, and then give a dual result about the relationship of small dual Ramsey degrees
and dual Ramsey expansions. 

\section{Preliminaries}
\label{srd-wap.sec.prelim}

Let us quickly fix some notation and conventions. All the categories in this paper are locally small.
Let $\CC$ be a category. By $\Ob(\CC)$ we denote the class of all the objects in $\CC$.
Hom-sets in $\CC$ will be denoted by $\hom_\CC(A, B)$, or simply $\hom(A, B)$ when $\CC$ is clear from the context.
The identity morphism will be denoted by $\id_A$ and the composition of morphisms by $\Boxed\cdot$ (dot).
If $\hom_\CC(A, B) \ne \0$ we write $A \toCC B$. 
A morphism $f \in \hom_\CC(B, C)$ is \emph{mono} if it is left cancellable, that is, $f \cdot g = f \cdot h \Rightarrow g = h$
for all $A \in \Ob(\CC)$ and all $g, h \in \hom_\CC(A, B)$. Dually, a morphism is \emph{epi} if it is right cancellable.
A category $\CC$ is \emph{directed} if for all $A, B \in \Ob(\CC)$
there is a $C \in \Ob(\CC)$ such that $A \toCC C$ and $B \toCC C$. Let $\iso_\CC(A, B)$ denote the set of all the
invertible morphisms from $\hom_\CC(A, B)$, and let $\Aut_\CC(A) = \iso_\CC(A, A)$ denote the set of all the
\emph{automorphisms of $A$}. We write $A \cong B$ to denote that $\iso_\CC(A, B) \ne \0$.
A \emph{skeleton} of a category $\CC$ is a full subcategory $\SS$ of $\CC$ such that
no two objects in $\SS$ are isomorphic and for every $C \in \Ob(\CC)$ there is an $S \in \Ob(\SS)$ such that
$C \cong S$. 
A subcategory $\DD$ of $\CC$ is \emph{cofinal in $\CC$} if
for every $C \in \Ob(\CC)$ there is a $D \in \Ob(\DD)$ such that $C \toCC D$.
As usual, $\CC^\op$ denotes the opposite category. Whenever the category $\CC$ is fixed we shall simply write $\hom(A, B)$, $\Aut(C)$, $t(A)$, etc.

For $k \in \NN$, a $k$-coloring of a set $S$ is any mapping $\chi : S \to k$, where,
as usual, we identify $k$ with $\{0, 1,\ldots, k-1\}$.
For positive integers $k, t \in \NN$ and objects $A, B, C \in \Ob(\CC)$ such that $A \toCC B$ we write
$
  C \longrightarrow (B)^{A}_{k, t}
$
to denote that for every $k$-coloring $\chi : \hom(A, C) \to k$ there is a morphism
$w \in \hom(B, C)$ such that $|\chi(w \cdot \hom(A, B))| \le t$.
(For a set of morphisms $F$ we let $w \cdot F = \{ w \cdot f : f \in F \}$.)
In case $t = 1$ we write
$
  C \longrightarrow (B)^{A}_{k}.
$

A category $\CC$ has the \emph{Ramsey property} if
for every integer $k \in \NN$ and all $A, B \in \Ob(\CC)$ there is a
$C \in \Ob(\CC)$ such that $C \longrightarrow (B)^{A}_k$.

For $A \in \Ob(\CC)$ let $t_\CC(A)$, the \emph{Ramsey degree of $A$ in $\CC$} (sometimes called the \emph{small} Ramsey degree)
 denotes the least positive integer $n$ such that for all $k \in \NN$ and all $B \in \Ob(\CC)$
 there exists a $C \in \Ob(\CC)$ such that $C \longrightarrow (B)^{A}_{k, n}$, if such an integer exists.
Otherwise put $t_\CC(A) = \infty$. A category $\CC$ has \emph{finite small Ramsey degrees}
if $t_\CC(A) < \infty$ for all $A \in \Ob(\CC)$.

As in~\cite{masul-kpt} we shall be working in the following setup. Let $\CC$ be a locally small category and
let $\CCfin$ be a full subcategory of $\CC$ such that the following holds:
\begin{description}
\item[(C1)]
   all the morphisms in $\CC$ are mono;
\item[(C2)]
  $\Ob(\CCfin)$ is a set;
\item[(C3)]
  for all $A, B \in \Ob(\CCfin)$ the set $\hom(A, B)$ is finite;
\item[(C4)]
  for every $F \in \Ob(\CC)$ there is an $A \in \Ob(\CCfin)$ such that $A \to F$; and
\item[(C5)]
  for every $B \in \Ob(\CCfin)$ the set $\{A \in \Ob(\CCfin) : A \to B\}$ is finite.
\end{description}
We think of objects in $\CCfin$ as templates of finite objects in $\CC$.
For the remainder of the section let us fix a locally small category $\CC$ and its full subcategory $\CCfin$ which
satisfies (C1)--(C5).

Let $\AAA$ be a full subcategory of $\CC$.
An object $F \in \Ob(\CC)$ is \emph{ultrahomogeneous for $\AAA$} if for every $A \in \Ob(\AAA)$ and every pair of morphisms
$e_1, e_2 \in \hom(A, F)$ there is a $g \in \Aut(F)$ such that $g \cdot e_1 = e_2$:
\begin{center}
  \begin{tikzcd}
    F \arrow[rr, "g"] & & F \\
     & A \arrow[ul, "e_1"] \arrow[ur, "e_2"'] &
  \end{tikzcd}
\end{center}
An object $F \in \Ob(\CC)$ is \emph{ultrahomogeneous in $\CC$ (with respect to $\CCfin$)} if it is
ultrahomogeneous for $\CCfin$.

Recall that an object $F$ is universal for $\AAA$ if $A \to F$ for all $A \in \Ob(\AAA)$.
We shall say that $F$ is \emph{universal in $\CC$ (with respect to $\CCfin$)} if it is universal for~$\CCfin$.
Let us define the \emph{age of $F$ in $\CC$ with respect to $\CCfin$} as
$$
  \Age_{(\CC,\CCfin)}(F) = \{A \in \Ob(\CCfin) : A \toCC F \}.
$$
Clearly, every $F$ is universal for its age.
Whenever $\CC$ and $\CCfin$ are fixed we shall simply write $\Age(F)$.

  Let $\DD$ be a full subcategory of a locally finite category $\CC$.
  An $F \in \Ob(\CC)$ is \emph{locally finite for $\DD$} if:
  \begin{itemize}
  \item
    for every $A, B \in \Ob(\DD)$ and every $e \in \hom(A, F)$, $f \in \hom(B, F)$ there exist $D \in \Ob(\DD)$,
    $r \in \hom(D, F)$, $p \in \hom(A, D)$ and $q \in \hom(B, D)$ such that $r \cdot p = e$ and $r \cdot q = f$:
    \begin{center}
        \begin{tikzcd}
          D \arrow[rr, "r"] & & F & &  \\
          & A \arrow[ul, "p"] \arrow[ur, "e"' near start] & & B \arrow[ulll, "q" near start] \ar[ul, "f"']
        \end{tikzcd}
    \end{center}
  \item
    and for every $H \in \Ob(\CC)$, $r' \in \hom(H, F)$, $p' \in \hom(A, H)$ and $q' \in \hom(B, H)$
    such that $r' \cdot p' = e$ and $r' \cdot q' = f$ there is an $s \in \hom(D, H)$ such that the diagram below commutes
    \begin{center}
        \begin{tikzcd}
          D \arrow[rr, "r"] \arrow[rrrr, dotted, bend left=20, "s"] & & F & & H \arrow[ll, "r'"'] \\
          & A \arrow[ul, "p"] \arrow[ur, "e" near end] \arrow[urrr, "p'"' near end] & & B \arrow[ulll, "q"' near end] \arrow[ul, "f"' near end]
              \arrow[ur, "q'"']
        \end{tikzcd}
    \end{center}
  \end{itemize}
  An $F \in \Ob(\CC)$ is \emph{locally finite in $\CC$ (with respect to $\CCfin$)} if it is locally finite for $\CCfin$.

  An \emph{expansion} of a category $\CC$ is a category $\CC^*$ together with a forgetful functor
  $U : \CC^* \to \CC$ (that is, a functor which is surjective on objects and injective on hom-sets).
  We shall generally follow the convention that $A, B, C, \ldots$ denote objects from $\CC$
  while $A^*, B^*, C^*, \ldots$ denote objects from $\CC^*$.
  Since $U$ is injective on hom-sets we may safely assume that
  $\hom_{\CC^*}(A^*, B^*) \subseteq \hom_\CC(A, B)$ where $A = U(A^*)$, $B = U(B^*)$.
  In particular, $\id_A^* = \id_A$ for $A = U(A^*)$. Moreover, it is safe to drop
  subscripts $\CC$ and $\CC^*$ in $\hom_\CC(A, B)$ and $\hom_{\CC^*}(A^*, B^*)$, so we shall
  simply write $\hom(A, B)$ and $\hom(A^*, B^*)$, respectively.
  Let
  $
    U^{-1}(A) = \{A^* \in \Ob(\CC^*) : U(A^*) = A \}
  $. Note that this is not necessarily a set. Therefore, we say that
  an expansion $U : \CC^* \to \CC$ is \emph{precompact} (cf.~\cite{vanthe-more}) if
  $U^{-1}(A)$ is a set for all $A \in \Ob(\CC)$, and it is a finite set for all $A \in \Ob(\CCfin)$. 

  An expansion $U : \CC^* \to \CC$ is \emph{reasonable} (cf.~\cite{KPT}) if
  for every $e \in \hom(A, B)$ and every $A^* \in U^{-1}(A)$ there is a $B^* \in U^{-1}(B)$ such that
  $e \in \hom(A^*, B^*)$:
  \begin{center}
      \begin{tikzcd}
        A^* \arrow[r, "e"] \arrow[d, dashed, mapsto, "U"'] & B^* \arrow[d, dashed, mapsto, "U"] \\
        A \arrow[r, "e"'] & B
      \end{tikzcd}
  \end{center}
  
  An expansion $U : \CC^* \to \CC$ has \emph{unique restrictions} if
  for every $B^* \in \Ob(\CC^*)$ and every $e \in \hom(A, U(B^*))$ there is a \emph{unique} $A^* \in U^{-1}(A)$
  such that $e \in \hom(A^*, B^*)$:
  \begin{center}
      \begin{tikzcd}
        \llap{\hbox{$\restr {B^*} e = \mathstrut$}}{A^*} \arrow[r, "e"] \arrow[d, dashed, mapsto, "U"'] & {B^*} \arrow[d, dashed, mapsto, "U"] \\
        A \arrow[r, "e"'] & B
      \end{tikzcd}
  \end{center}
  We denote this unique $A^*$ by $\restr {B^*} e$ and refer to it as the \emph{restriction of $B^*$ along~$e$}.

  Let $\AAA$ and $\AAA^*$ be categories and $U : \AAA^* \to \AAA$ and expansion.
  Following \cite{vanthe-more} we say that $U : \AAA^* \to \AAA$ \emph{has the expansion property}
  if for every $A \in \Ob(\AAA)$ there exists a $B \in \Ob(\AAA)$ such that $A^* \to B^*$ for all
  $A^*, B^* \in \Ob(\AAA^*)$ with $U(A^*) = A$ and $U(B^*) = B$.
  If $\AAA^*$ is directed and all the morphisms in $\AAA$ are mono, and if
  $U : \AAA^* \to \AAA$ is a reasonable expansion with unique restrictions such that $U^{-1}(A)$ is finite
  for all $A \in \Ob(\AAA)$ then $U : \AAA^* \to \AAA$ has the expansion property if and only if
  for every $D^* \in \Ob(\AAA^*)$ there is a $B \in \Ob(\AAA)$
  such that for all $B^* \in \Ob(\AAA^*)$ with $U(B^*) = B$ we have $D^* \to B^*$~\cite{masul-kpt}.

  Let $F$ be a locally finite object. For $A \in \Ob(\CCfin)$ such that $A \to F$ and $e_1, e_2 \in \hom(A, F)$ let
  $
    N_F(e_1, e_2) = \{ f \in \Aut(F) : f \cdot e_1 = e_2 \}
  $.
  Then
  $$
    M_F = \{ N_F(e_1, e_2) : A \in \Ob(\CCfin), \; A \toCC F \text{ and } e_1, e_2 \in \hom(A, F) \}.
  $$
  is a base of a topology $\tau_F$ on $\Aut(F)$~\cite{masul-kpt}.
  
  Let $U : \CC^* \to \CC$ be a reasonable precompact expansion with unique restrictions and let $F \in \Ob(\CC)$.
  For $A \in \Ob(\CCfin)$,  $e \in \hom(A, F)$ and $A^* \in U^{-1}(A)$ let
  $$
    N(e, A^*) = \{ F^* \in U^{-1}(F) : e \in \hom(A^*, F^*) \}.
  $$
  Then $\calS_F = \{ N(e, A^*) : U(A^*) \in \Ob(\CCfin), e \in \hom(U(A^*), F) \}$
  is a base of clopen sets of a topology $\sigma_F$ on $U^{-1}(F)$~\cite{masul-kpt}.
  
  Each reasonable expansion with unique restrictions yields
  an action of $\Aut(F)$ on $U^{-1}(F)$ for every $F \in \Ob(\CC)$:
  for $F \in \Ob(\CC)$, $g \in \Aut(F)$ and $F^* \in U^{-1}(F)$ let $F^* \cdot g$ denote the unique
  element of $U^{-1}(F)$ satisfying $g \in \hom(F^*\cdot g, F^*)$. (See~\cite{masul-kpt} for details.)
  This action is continuous with respect to topologies $\tau_F$ on $\Aut(F)$ and $\sigma_F$ on $U^{-1}(F)$ and will be referred to as \emph{logical}.

That every \Fraisse\ class with finite Ramsey degrees has a Ramsey expansion
was first established in~2016 in~\cite{zucker1}. A combinatorial proof of the same fact was then given in~\cite{vanthe-finramdeg},
and this was put into the context of category theory in~\cite{masul-kpt}. The categorical version takes the following form:

\begin{THM} \cite{zucker1,vanthe-finramdeg,masul-kpt}\label{thm-zucker}
  Let $\CC$ be a locally small category and let $\CCfin$ be a full subcategory of $\CC$ such that (C1) -- (C5) hold.
  Let $F \in \Ob(\CC)$ be an ultrahomogeneous locally finite object, and let $\AAA$ be the full subcategory of
  $\CCfin$ spanned by $\Age(F)$. Then the following are equivalent:
  
  (1) $\AAA$ has finite Ramsey degrees.
  
  (2) There is a reasonable precompact expansion with unique restrictions $U : \CC^* \to \CC$ and
  a full subcategory $\AAA^*$ of $\CC^*$ which is directed, has the Ramsey property and
  $\restr{U}{\AAA^*} : \AAA^* \to \AAA$ has the expansion property.
\end{THM}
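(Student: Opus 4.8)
The two implications have quite different characters: $(2)\Rightarrow(1)$ is a soft counting argument in which the particular object $F$ plays no role, while $(1)\Rightarrow(2)$ is the substantial direction, and it is exactly there that the ultrahomogeneity and local finiteness of $F$ are used.

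For $(2)\Rightarrow(1)$, the plan is to fix such an expansion $U\colon\CC^*\to\CC$ and the directed full subcategory $\AAA^*\subseteq\CC^*$ with the Ramsey property, and to bound $t_\AAA(A)$ by $|U^{-1}(A)|$. First I would note that the Ramsey property of $\AAA^*$ upgrades to its \emph{simultaneous} form: for any objects $A^*_1,\dots,A^*_m$ with $A^*_i\toCC B^*$ in $\AAA^*$ and any $k\in\NN$ there is a single $C^*\in\Ob(\AAA^*)$ such that for \emph{every} family of colourings $\chi_i\colon\hom(A^*_i,C^*)\to k$ there is one $w\in\hom(B^*,C^*)$ with each $\chi_i$ constant on $w\cdot\hom(A^*_i,B^*)$; this follows by building a tower $B^*=D_0\toCC D_1\toCC\dots\toCC D_m=C^*$ with $D_i\longrightarrow(D_{i-1})^{A^*_i}_k$, then descending it, pulling the colourings back and composing the witnesses. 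Now fix $A\in\Ob(\AAA)$ and put $n=|U^{-1}(A)|$, which is finite by precompactness. Given $B\in\Ob(\AAA)$ and $k\in\NN$, pick any $B^*\in U^{-1}(B)$, apply the simultaneous form to $\{A^*\in U^{-1}(A):A^*\toCC B^*\}$ and $B^*$ to obtain $C^*$, and set $C=U(C^*)$. Since $U$ has unique restrictions, $\hom(A,C)=\bigsqcup_{A^*\in U^{-1}(A)}\hom(A^*,C^*)$ and $\hom(A,B)=\bigsqcup_{A^*\in U^{-1}(A)}\hom(A^*,B^*)$, so feeding the restrictions $\restr{\chi}{\hom(A^*,C^*)}$ of an arbitrary $\chi\colon\hom(A,C)\to k$ into the simultaneous statement produces $w\in\hom(B^*,C^*)\subseteq\hom(B,C)$ with $\chi$ constant on each block $w\cdot\hom(A^*,B^*)$ of $w\cdot\hom(A,B)$; hence $|\chi(w\cdot\hom(A,B))|\le n$, so $C\longrightarrow(B)^{A}_{k,n}$ and $t_\AAA(A)\le n<\infty$. (The expansion property is not needed in this direction.)

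For $(1)\Rightarrow(2)$ the plan is to build the Ramsey expansion by decorating objects with canonical colourings distilled from the finite Ramsey degrees. Since $\AAA=\Age(F)$, the object $F$ is universal for $\AAA$, and being ultrahomogeneous and locally finite it plays the role of a \Fraisse\ limit of $\AAA$. The decisive step is to produce, for every $A\in\Ob(\AAA)$, a colouring $\gamma_A\colon\hom(A,F)\to t_\AAA(A)$ such that the family $(\gamma_A)_{A\in\Ob(\AAA)}$ is \emph{coherent} (consistent under the subobject relation and under the amalgams that $F$ witnesses) and each $\gamma_A$ is \emph{extremal}, behaving on every copy of every $B\in\Ob(\AAA)$ like a colouring attaining the degree $t_\AAA(A)$. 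Such a family is manufactured by a compactness argument along a cofinal chain in $\AAA$, applied to the extremal colourings supplied by the Ramsey degrees; local finiteness of $F$ is exactly what makes the finitely many coherence constraints at each finite stage simultaneously realizable inside $F$. Given $(\gamma_A)_A$, one defines $\CC^*$ by declaring an object over $A\in\Ob(\CC)$ to record, along each $e\in\hom(A,F)$, the pattern of $\gamma$-colours it induces on the subobjects of $A$ (objects of $\CC$ admitting no morphism to $F$ carry the trivial expansion), the morphisms being those morphisms of $\CC$ that respect these patterns, and lets $\AAA^*$ be the full subcategory over $\Ob(\AAA)$. Then $U\colon\CC^*\to\CC$ is precompact (only finitely many $A'$ satisfy $A'\toCC A$, by (C5), and each $\gamma_{A'}$ has finitely many values), it is reasonable and has unique restrictions by construction (ultrahomogeneity of $F$ extends a pattern on $A$ along any $e\colon A\to B$ to a pattern on $B$), and $\AAA^*$ is directed because local finiteness of $F$ amalgamates two copies $\bar e\in\hom(A,F)$ and $\bar f\in\hom(B,F)$ through a $D\toCC F$ whose induced pattern dominates those of $A$ and $B$.

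It remains to verify the Ramsey property of $\AAA^*$ and the expansion property of $\restr{U}{\AAA^*}$, and this is where the real work lies. Both rest on a cofinality property of the $\gamma$-decoration-classes guaranteed by the extraction, namely that every expanded object of $\AAA^*$ occurs inside every sufficiently large expanded object; this yields the expansion property directly, after passing if necessary to a minimal subsystem of the $\gamma_A$ (equivalently, to a minimal subflow of the logical action of $\Aut(F)$ on $U^{-1}(F)$). For the Ramsey property, given $A^*\toCC B^*$ in $\AAA^*$ and $k\in\NN$, write $A=U(A^*)$; one picks $B\in\Ob(\AAA)$ so large that every expansion of $B$ realizes a copy of $B^*$ and a copy of each expansion type of $A$, picks $C_0\in\Ob(\AAA)$ with $C_0\longrightarrow(B)^{A}_{k+t_\AAA(A)-1,\,t_\AAA(A)}$, and lifts $C_0$ to a $C^*$ with $B^*\toCC C^*$. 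A given colouring $\chi\colon\hom(A^*,C^*)\to k$ is extended to $\hom(A,C_0)$ by assigning to the copies of $A$ in each of the $t_\AAA(A)-1$ remaining $\gamma_A$-classes its own fresh dummy colour; applying the Ramsey-degree bound gives $v\in\hom(B,C_0)$ with at most $t_\AAA(A)$ colours of the extended colouring on $v\cdot\hom(A,B)$, and since all the dummy colours necessarily occur there, $\chi$ is forced to be constant on the copies of $A^*$, so restricting $v$ to a copy of $B^*$ yields the required $w\in\hom(B^*,C^*)$. The step I expect to be the main obstacle is the construction of the canonical family $(\gamma_A)_A$ together with its use in the Ramsey verification: arranging it to be coherent enough to live inside $F$ yet extremal and cofinal enough to force both the Ramsey property of $\AAA^*$ and the expansion property is the technical heart of \cite{zucker1,vanthe-finramdeg,masul-kpt}.
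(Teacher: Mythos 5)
Your proposal is correct and follows essentially the same route as the paper (which quotes this theorem from \cite{zucker1,vanthe-finramdeg,masul-kpt} and recapitulates the construction at the end of Section~4): the counting argument via unique restrictions and a product form of the Ramsey property for $(2)\Rightarrow(1)$, and for $(1)\Rightarrow(2)$ the essential colourings $\gamma_A\colon\hom(A,F)\to t_\AAA(A)$, the expansion by colouring patterns, the passage to a minimal orbit closure of the logical action of $\Aut(F)$ on $U^{-1}(F)$ with $\AAA^*=\Age(F^*)$, and the dummy-colour argument for the Ramsey property. The only real deviation is that the paper's $\CC^*$ admits \emph{all} families of colourings $\theta_A\colon\hom(A,C)\to t_\AAA(A)$ rather than only the $F$-induced patterns, which is what makes reasonableness automatic for objects not embedding into $F$.
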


\section{Weak amalgamation and weak \Fraisse\ categories}
\label{srd-wap.sec.weak-fraisse-cats}

Let $\CC$ be a locally small category whose morphisms are mono.
We say that $\CC$ has the \emph{weak amalgamation property}~\cite{kubis-weak-fraisse-cat}
if for any $A \in \Ob(\CC)$ there is $A' \in \Ob(\CC)$ and a morphism $f \in \hom(A, A')$
so that whenever we are given $B, C \in \Ob(\CC)$ and morphisms $g \in \hom(A', B)$ and $h \in \hom(A', C)$
there are $D \in \Ob(\CC)$ and morphisms $r \in \hom(B, D)$ and $s \in \hom(C, D)$ so that
$r \cdot g \cdot f = s \cdot h \cdot f$:
\begin{center}
    \begin{tikzcd}[row sep=tiny]
              & A' \arrow[r, "g"] & B \arrow[dr, "r"] & \\
        A \arrow[ur, "f"] \arrow[dr, "f"'] &  &  &  D\\
              & A' \arrow[r, "h"'] & C \arrow[ur, "s"'] & 
    \end{tikzcd}
\end{center}
When $f$ is as above we call such an $f$ an \emph{amalgamation arrow for A}.

\begin{THM}\label{srd-wap.thm.srd=>wap}
  Let $\CC$ be a locally small directed category with finite Ramsey degrees.
  Then $\CC$ has the weak amalgamation property.
\end{THM}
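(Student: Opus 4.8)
The plan is to use the finiteness of the Ramsey degree $t_\CC(A)$ to produce the amalgamation arrow by a stabilization (or "pigeonhole") argument. Fix $A \in \Ob(\CC)$ and set $t = t_\CC(A) < \infty$. The key idea is that $t$ being the \emph{least} witnessing integer means there is some $B_0 \in \Ob(\CC)$ and some $k$ witnessing that $t-1$ does \emph{not} work, i.e.\ for every $C$ with $A \toCC C$ there is a $k$-coloring of $\hom(A,C)$ which, on every copy $w \cdot \hom(A, B_0)$ of $B_0$, uses at least $t$ colors. Using directedness I can fold $B_0$ and an arbitrary later object together, so morally I may assume the amalgamation arrow I seek factors through a morphism $f_0 \in \hom(A, A')$ coming from a chosen $B_0$-witness. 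Actually I expect the cleaner route: among all morphisms $f \in \hom(A, A')$ (over all $A' \in \Ob(\CC)$), consider the equivalence relation on "spans out of $A'$" that two morphisms $g, h$ out of $A'$ are \emph{coamalgamable over $f$} if some $r, s$ equalize $r\cdot g \cdot f = s \cdot h \cdot f$; I want to find $f$ making \emph{all} spans out of $A'$ coamalgamable over $f$.

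The main step is the following dichotomy, driven by the Ramsey degree. Suppose toward a contradiction that no $f$ out of $A$ is an amalgamation arrow. Then, starting from $\id_A$, I can build an infinite sequence $A = A_0 \toCC A_1 \toCC A_2 \toCC \cdots$ with connecting morphisms $f_i \in \hom(A_i, A_{i+1})$, together with obstructing spans: for each $i$, since $f_i \cdots f_0 \in \hom(A, A_{i+1})$ is not an amalgamation arrow, there are $g_i, h_i$ out of $A_{i+1}$ with no equalizer after precomposing with $f_i \cdots f_0$. By directedness I can arrange the codomains of $g_i$ and $h_i$ to map into $A_{i+2}$, so that the whole configuration lives in the chain. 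Each obstructing span gives (after suitable choices) two distinct "behaviors" of the morphism $A \to A_{i+2}$ that cannot be reconciled downstream; concatenating $t$ many such incompatible behaviors along the chain produces, for a sufficiently long finite piece $A_n$, a copy of $A$-morphisms into some $C := A_n$ that is forced to receive $> t$ colors under the colorings supplied by the Ramsey-degree witness — contradicting $C \longrightarrow (B)^A_{k,t}$ for an appropriate $B$ built from the chain. Concretely: take $B$ to be an object admitting all the morphisms $g_i \cdot f_i \cdots f_0$ and $h_i \cdot f_i \cdots f_0$ for $i < t$ (directedness again), and design a $k$-coloring of $\hom(A, C)$, for $C$ far enough along, that separates these $2t$ morphisms into $t+1$ color classes; the non-amalgamability guarantees no single $w \in \hom(B, C)$ can collapse them.

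I expect the main obstacle to be the bookkeeping in that last paragraph: turning "the span $(g_i, h_i)$ has no equalizer over $f_i\cdots f_0$" into a genuine coloring that provably uses many colors on every copy of the chosen $B$. The subtlety is that non-amalgamability is an existential-failure statement (\emph{no} $r,s$ work) and must be converted into a \emph{uniform} color separation that survives precomposition by an arbitrary $w \in \hom(B, C)$; this is exactly where monomorphicity of all $\CC$-morphisms (axiom (C1)-style, here built into the hypothesis) is essential, since it lets distinct morphisms out of $A$ stay distinct after composing with $w$. Once the coloring is in hand the count is mechanical. I would also double-check the base case $t = 1$: there the Ramsey property proper holds (degree $1$), and the classical Nešetřil–Rödl argument already gives full amalgamation, hence in particular weak amalgamation with $f = \id_A$ — so the interesting content is genuinely $t \ge 2$, and the chain-building argument must use the \emph{minimality} of $t$ in an essential way, not merely its finiteness.
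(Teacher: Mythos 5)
Your overall architecture matches the paper's: assuming no morphism out of $A$ is an amalgamation arrow, you build a chain $A = A_0 \to A_1 \to \cdots$ together with pairwise non-amalgamable morphisms out of $A$, and you aim to contradict the finite Ramsey degree with a coloring. The chain-building step is fine. But the heart of the matter --- converting non-amalgamability into a contradiction with $C \longrightarrow (B)^A_{k,t}$ --- is exactly where your proposal stops, and the route you sketch for it points the wrong way. You try to \emph{construct a coloring that provably uses more than $t$ colors on every copy} $w \cdot \hom(A,C)$, and you correctly sense that this is hard: non-amalgamability is a negative existential statement and gives no uniform control over an arbitrary $w$. The trick is to run the implication in the other direction. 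Take $k = t_\CC(A)+1$ pairwise non-amalgamable morphisms $f_i \in \hom(A, B_i)$ (supplied by your chain), fold everything into one object $C$ via directedness with $g_i \in \hom(B_i, C)$, choose $D$ with $D \longrightarrow (C)^A_{k,k-1}$, and color $h \in \hom(A,D)$ by the \emph{least} $i$ such that $h$ factors through $f_i$ (with a default color when no $i<k-1$ works). The Ramsey degree then \emph{hands you} an $x \in \hom(C,D)$ whose copy omits some color $j$; the morphism $x \cdot g_j \cdot f_j$ lies in that copy and factors through $f_j$, yet is colored $i \ne j$, so it also factors through $f_i$ --- which amalgamates $f_i$ with $f_j$ and contradicts the construction. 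No coloring ever needs to be shown ``bad on every copy.''

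Two smaller corrections. Your closing claim that the argument ``must use the minimality of $t$ in an essential way'' is mistaken: only finiteness of $t_\CC(A)$ is used, through the relation $D \longrightarrow (C)^A_{k,k-1}$; the witness $B_0$ for the failure of $t-1$ plays no role. Likewise, monomorphicity is not what makes the coloring step work --- the least-index-of-factorization coloring and the avoided-color pigeonhole never require distinct morphisms out of $A$ to remain distinct after composition with $w$.
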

\begin{proof}
  The following weakening of the amalgamation property will be useful during the course of the proof.
  Let $\CC$ be a locally small category whose morphisms are mono and fix $k < \omega$.
  We say that $A \in \Ob(\CC)$ has \emph{2-out-of-$k$-amalgamation}
  if for any $B_0,\ldots,B_{k-1} \in \Ob(\CC)$ and morphisms $g_i \in \hom(A, B_i)$ there
  are $C \in \Ob(\CC)$, $i \ne j < k$ and morphisms $r \in \hom(B_i, C)$ and $s \in \hom(B_j, C)$
  with $r \cdot g_i = s \cdot g_j$:
  \begin{center}
      \begin{tikzcd}[column sep=small]
              &        &     &   C    &     &        &         \\
          B_0 & \cdots & B_i \arrow[ur, "r"] & \cdots & B_j \arrow[ul, "s"'] & \cdots & B_{k-1} \\
              &        &     &
              A \arrow[ulll, "g_0"] \arrow[ul, "g_i"'] \arrow[ur, "g_j"] \arrow[urrr, "g_{k-1}"']
              &     &        &
      \end{tikzcd}
  \end{center}

  The proof now proceeds in two steps.

  \bigskip

  Claim 1. Assume that $t_\CC(A) = k - 1$ for some $A \in \Ob(\CC)$. Then $A$ has 2-out-of-$k$-amalgamation.

  Proof.
  Fix morphisms $f_i \in \hom(A, B_i)$ with $B_i \in \Ob(\CC)$ for each $i < k$. Then use the
  fact that $\CC$ is directed to find some $C \in \Ob(\CC)$ and morphisms $g_i \in \hom(B_i, C)$ for each $i < k$.
  Next find $D \in \Ob(\CC)$ satisfying
  $$
    D \longrightarrow (C)^A_{k,k-1}.
  $$
  Consider the coloring $\chi\colon \hom(A, D) \to k$ where $\chi(h) = i < k - 1$ if $i$ is least so that there is
  $g \in \hom(B_i, D)$ with $h = g \cdot f_i$, or set $\chi(h) = k - 1$ if there is no such $i < k - 1$.
  Then there is an $x \in \hom(C, D)$ such that $|\chi(x \cdot \hom(A, C))| \le k - 1$.
  In particular, there is some color $j < k$ which is avoided, that is, $j \notin \chi(x \cdot \hom(A, C))$.
  Then consider the value of $\chi(x \cdot g_j \cdot f_j) = i \ne j$. So there
  is $g \in \hom(B_i ,D)$ with $g \cdot f_i = x \cdot g_j \cdot f_j$, showing that $f_i$ and $f_j$ can be amalgamated.
  This concludes the proof of Claim~1.

  \bigskip

  Claim 2. Suppose for every $A \in \Ob(\CC)$ there is some $k \in \NN$ so that $A$ has 2-out-of-$k$-amalgamation.
  Then $\CC$ has the weak amalgamation property.

  Proof.
  Suppose $\CC$ failed to have the weak amalgamation property as witnessed by $A \in \Ob(\CC)$. Then the identity map
  $\id_A$ is not an amalgamation arrow. Set $C_{0} = A$ and $\id_A = f_0 \in \hom(A, C_{0})$.
  Now suppose $f_i \in \hom(A, C_{i})$ has been defined. Then $f_i$ is not an amalgamation arrow, so we may
  find $B_{i+1}, C_{i+1} \in \Ob(\CC)$ and morphisms $g_i \in \hom(C_{i}, B_{i+1})$ and $h_i \in \hom(C_{i}, C_{i+1})$
  so that $g_i \cdot f_i$ and $h_i \cdot f_i$ cannot be amalgamated.
  Then set $f_{i+1} = h_i \cdot f_i$. We can continue this as long as we would
  like, obtaining arrows $\{g_i \cdot f_i : i < \omega\}$ no pair of which can be amalgamated.
  Therefore, $A$ does not have 2-out-of-$k$-amalgamation.
  This concludes the proof of Claim~2 and proof of the theorem.
\end{proof}

The remainder of this section is devoted to the exposition of key notions of weak \Fraisse\ theory~\cite{kubis-weak-fraisse-cat}
which is one of the building blocks of our construction.

A subcategory $\DD$ of $\CC$ is \emph{weakly dominating in $\CC$}~\cite{kubis-weak-fraisse-cat}
if it is cofinal in $\CC$
and for every $D \in \Ob(\DD)$ there exist a $D' \in \Ob(\DD)$ and a $j \in \hom_\DD(D, D')$
such that for every $C \in \Ob(\CC)$ and every $f \in \hom_\CC(D', C)$
there is an $E \in \Ob(\DD)$ and a morphism $g \in \hom_\CC(C, E)$ such that
$g \cdot f \cdot j$ is a morphism in $\DD$.

\begin{center}
  \begin{tikzcd}[execute at end picture={
    \draw (-3.25,-2.25) rectangle (3.25,0.0);
  }]
                     & C \arrow[ddr, "g" near start]  & & \CC\\
                     &  & & \\
    D \arrow[r, "j"] \arrow[rr, bend right, "g \cdot f \cdot j"'] & D' \arrow[uu, "f" near end] & E & \DD
  \end{tikzcd}
\end{center}

A category $\CC$ is a \emph{weak \Fraisse\ category}~\cite{kubis-weak-fraisse-cat}
if it is directed, has the weak amalgamation property and is weakly dominated by a countable subcategory.

A sequence $W = (W_n, w_n^m)_{n \le m \in \omega}$ is a \emph{weak \Fraisse\ sequence}~\cite{kubis-weak-fraisse-cat}
if the following is satisfied:
\begin{itemize}
\item for every $C \in \Ob(\CC)$ there is an $n \in \omega$ such that $C \toCC W_n$; and
\item for every $n \in \omega$ there exists an $m \ge n$ such that for every
      $f \in \hom_\CC(W_m, C)$ there are $k \ge m$ and $g \in \hom_\CC(C, W_k)$
      satisfying $g \cdot f \cdot w^m_n = w^k_n$.
      \begin{center}
        \begin{tikzcd}
          \cdots \arrow[r] & W_n \arrow[r, "w_n^m"] \arrow[dr, "w_n^m"'] & W_m \arrow[rr, "w_m^k"]  & & W_k \arrow[r] & \cdots \\
                           &                        &        W_m \arrow[r, "f"']   & C \arrow[ur, "g"'] & &
        \end{tikzcd}
      \end{center}
\end{itemize}

If follows immediately that every category with a weak \Fraisse\ sequence is directed and has the weak amalgamation property~\cite{kubis-weak-fraisse-cat}.

A particularly important class of weak \Fraisse\ categories is the category of chains formed from
a category with a weak \Fraisse\ limit~\cite{kubis-weak-fraisse-cat, kubis-fraisse-seq, bbbk-weak-ramsey}.
Let $\omega = \{0, 1, 2, \ldots\}$ denote the chain of nonnegative integers treated here
as a poset category (that is, for all $n, m \in \omega$ such that $n \le m$ there is a single morphism $n \to m$).
A \emph{sequence in a category $\CC$} is any functor $X : \omega \to \CC$. We shall find it more convenient
to describe functors $X : \omega \to \CC$ as $(X_n, x_n^m)_{n \le m \in \omega}$ where $X_n = X(n) \in \Ob(\CC)$
and $x_n^m \in \hom_\CC(X_n, X_m)$ is the image under $X$ of the only morphism $n \to m$.
Then, clearly, $x_n^n = \id_{X_n}$ and $x_m^k \cdot x_n^m = x_n^k$ whenever $n \le m \le k$.

The next step is to define morphisms between sequences in such a way that
a morphism from a sequence $X$ to a sequence $Y$ induces a morphism from the colimit of $X$ into the
colimit of $Y$ whenever the category of sequences is embedded into a category in which sequences
$X$ and $Y$ have colimits.
This is a two-phase process: the idea is to start with the category of sequences and transformations $\sigma_1\CC$
and then factor this category by a congruence to arrive at the category $\sigma_0\CC$
of sequences and morphisms between them. Hence, a morphism of sequences is an equivalence class of transformations between them.

Let $(X_n, x_n^m)_{n \le m \in \omega}$ and $(Y_n, y_n^m)_{n \le m \in \omega}$ be sequences in $\CC$.
A \emph{transformation} $(X_n, x_n^m)_{n \le m \in \omega} \to (Y_n, y_n^m)_{n \le m \in \omega}$
is a pair $(F, \phi)$ where $\phi : \omega \to \omega$ is a functor such that $\phi(\omega)$ is cofinal in $\omega$
and $F : X \to Y \circ \phi$ is a natural transformation. In other words, $\phi$ is a nondecreasing cofinal map $\omega \to \omega$
(that is, $i \le j \Rightarrow \phi(i) \le \phi(j)$ and for every $n$ there is an $m$ such that $n \le \phi(m)$)
and there is a family of arrows $F_n : X_n \to Y_{\phi(n)}$, $n \in \omega$, such that
\begin{center}
  \begin{tikzcd}
    X_n \arrow[d, "F_n"'] \arrow[rr, "x_n^m"] & & X_m \arrow[d, "F_m"]\\
    Y_{\phi(n)} \arrow[rr, "y_{\phi(n)}^{\phi(m)}"'] & & Y_{\phi(m)}
  \end{tikzcd}
  \quad
  \text{for all $n \le m \in \omega$.}
\end{center}
All sequences in $\CC$ and transformations between them form a category that we denote by $\sigma_1\CC$.

Two transformations $(F, \phi), (G, \psi) : (X_n, x_n^m)_{n \le m \in \omega} \to (Y_n, y_n^m)_{n \le m \in \omega}$
are \emph{equivalent}, in symbols $(F, \phi) \approx (G, \psi)$,
if for every $n \in \omega$ there exists an $m \ge \max\{\phi(n), \psi(n)\}$ such that
\begin{center}
  \begin{tikzcd}
     X_n \arrow[d, "F_n"'] \arrow[dr, "G_n"] \\
     Y_{\phi(n)} \arrow[rr, bend right, "y_{\phi(n)}^m"'] & Y_{\psi(n)} \arrow[r, "y_{\psi(n)}^m"] & Y_m
  \end{tikzcd}
\end{center}
It is easy to check that $\approx$ is a congruence of $\sigma_1\CC$, so let $\sigma_0\CC = \sigma_1\CC / \Boxed{\approx}$
be the factor category. Clearly, each morphism is an equivalence class of transformations.
Just as a quick demonstration of the interaction of all these notions let us show the following

\begin{LEM}\label{srd-wap.lem.monos}
  Let $\CC$ be a locally small category whose morphisms are mono. Then all the morphisms in $\sigma_0\CC$ are mono.
\end{LEM}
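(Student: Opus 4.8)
The plan is to unwind the definitions so that the statement becomes a left-cancellation argument inside $\CC$. First I would record the composition law in $\sigma_1\CC$: for transformations $(G,\psi)\colon Z\to X$ and $(F,\phi)\colon X\to Y$ the composite is $(F,\phi)\circ(G,\psi)=(H,\phi\circ\psi)$ with $H_n=F_{\psi(n)}\cdot G_n\colon Z_n\to Y_{\phi(\psi(n))}$ (the index function $\phi\circ\psi$ is again nondecreasing and cofinal, and the $H_n$ form a natural transformation by the usual pasting of natural transformations). Since $\approx$ is a congruence on $\sigma_1\CC$, a morphism $[(F,\phi)]$ of $\sigma_0\CC$ is mono exactly when, for every sequence $Z$ and every pair of transformations $(G_1,\psi_1),(G_2,\psi_2)\colon Z\to X$, the relation $(F,\phi)\circ(G_1,\psi_1)\approx(F,\phi)\circ(G_2,\psi_2)$ forces $(G_1,\psi_1)\approx(G_2,\psi_2)$; so it suffices to prove this implication for an arbitrary transformation $(F,\phi)$.

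So fix such data and fix $n\in\omega$; write $p=\psi_1(n)$, $q=\psi_2(n)$ and $r=\max\{p,q\}$. Unwinding the hypothesis at $n$ produces an $m\ge\max\{\phi(p),\phi(q)\}=\phi(r)$ (the equality because $\phi$ is nondecreasing and $r\in\{p,q\}$) with
$$y_{\phi(p)}^m\cdot F_p\cdot G^1_n = y_{\phi(q)}^m\cdot F_q\cdot G^2_n.$$
Now factor $y_{\phi(p)}^m=y_{\phi(r)}^m\cdot y_{\phi(p)}^{\phi(r)}$ and likewise for $q$, and apply naturality of $F$ in the form $y_{\phi(p)}^{\phi(r)}\cdot F_p=F_r\cdot x_p^r$ (and the analogue for $q$). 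The displayed equation becomes
$$(y_{\phi(r)}^m\cdot F_r)\cdot x_p^r\cdot G^1_n = (y_{\phi(r)}^m\cdot F_r)\cdot x_q^r\cdot G^2_n.$$

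Finally I would invoke the standing hypothesis that every morphism of $\CC$ is mono: monos are closed under composition, so $y_{\phi(r)}^m\cdot F_r$ is mono and hence left-cancellable, which leaves $x_{\psi_1(n)}^r\cdot G^1_n = x_{\psi_2(n)}^r\cdot G^2_n$ with $r\ge\max\{\psi_1(n),\psi_2(n)\}$. Since $n$ was arbitrary, this is precisely the condition defining $(G_1,\psi_1)\approx(G_2,\psi_2)$, and the proof is complete. I do not expect a genuine obstacle here; the only things needing care are the bookkeeping with the cofinal index maps (checking that $\phi(r)=\max\{\phi(p),\phi(q)\}$ and that each transition morphism appearing in the factorizations is well-formed) and remembering that it is the congruence property of $\approx$ that allows monicity to be tested on chosen representatives rather than on $\approx$-classes.
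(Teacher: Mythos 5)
Your proposal is correct and follows essentially the same route as the paper: unwind the definition of $\approx$ for the two composites, rewrite both sides via naturality of $F$, and left-cancel a composite of $\CC$-monos to land on the defining condition for $(G_1,\psi_1)\approx(G_2,\psi_2)$. Your index bookkeeping is in fact slightly cleaner — you apply naturality at $r=\max\{\psi_1(n),\psi_2(n)\}$ directly, whereas the paper first pushes up to a later index in the image of $\psi$ before cancelling — but the substance of the argument is identical.
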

\begin{proof}
  Let $X = (X_n, x_n^m)_{n \le m \in \omega}$, $Y = (Y_n, y_n^m)_{n \le m \in \omega}$ and
  $Z = (Z_n, z_n^m)_{n \le m \in \omega}$ be sequences in $\CC$, and let $(F, \phi) : Y \to Z$
  and $(G, \psi), (H, \theta) : X \to Y$ be transformations such that $(F, \phi) \cdot (G, \psi) \approx (F, \phi) \cdot (H, \theta)$.
  We are going to show that $(G, \psi) \approx (H, \theta)$.

  Note that $(F, \phi) \cdot (G, \psi) \approx (F, \phi) \cdot (H, \theta)$
  means that for every $n \in \omega$ there is an $m \ge \max\{\phi(\psi(n)), \phi(\theta(n))\}$ such that
  \begin{equation}\label{crt.eq.weak-amalg-mono-1}
      z_{\phi(\psi(n))}^{m} \cdot F_{\psi(n)} \cdot G_n = z_{\phi(\theta(n))}^{m} \cdot F_{\theta(n)} \cdot H_n
  \end{equation}
  \begin{center}
    \begin{tikzcd}
       X_n \arrow[d, "G_n"'] \arrow[dr, "H_n"] \\
       Y_{\psi(n)} \arrow[d, "F_{\psi(n)}"'] & Y_{\theta(n)} \arrow[d, "F_{\theta(n)}"] \\
       Z_{\phi(\psi(n))} \arrow[rr, bend right] & Z_{\phi(\theta(n))} \arrow[r] & Z_m
    \end{tikzcd}
  \end{center}
  Since $\phi$, $\psi$ and $\theta$ are nondecreasing and cofinal, $\phi \circ \theta$ and
  $\phi \circ \psi$ are also nondecreasing and cofinal, so there is a $k \in \omega$ such that $\theta(k) > \theta(n)$ and
  $\phi(\theta(k)) > m$, and an $s \in \omega$ such that $\psi(s) > \theta(k)$ and $\phi(\psi(s)) > \phi(\theta(k))$:
  \begin{center}
    \begin{tikzcd}
       X_n \arrow[d, "G_n"'] \arrow[dr, "H_n" near start] \\
       Y_{\psi(n)} \arrow[rrrr, bend left=15] \arrow[d, "F_{\psi(n)}"'] & Y_{\theta(n)} \arrow[d, "F_{\theta(n)}"] \arrow[rr] & & Y_{\theta(k)} \arrow[d, "F_{\theta(k)}"] \arrow[r] & Y_{\psi(s)} \arrow[d, "F_{\psi(s)}"] \\
       Z_{\phi(\psi(n))} \arrow[rrrr, bend right=15] \arrow[rr, bend right=12] & Z_{\phi(\theta(n))} \arrow[r] & Z_m \arrow[r] & Z_{\phi(\theta(k))} \arrow[r] & Z_{\phi(\psi(s))}
    \end{tikzcd}
  \end{center}
  The rest of the proof reduces to straightforward calculation. Multiplying \eqref{crt.eq.weak-amalg-mono-1}
  by $z_{m}^{\phi(\psi(s))}$ from the left we get
  $$
    z_{m}^{\phi(\psi(s))} \cdot z_{\phi(\psi(n))}^{m} \cdot F_{\psi(n)} \cdot G_n = z_{m}^{\phi(\psi(s))} \cdot z_{\phi(\theta(n))}^{m} \cdot F_{\theta(n)} \cdot H_n
  $$
  that is
  $$
    z_{\phi(\psi(n))}^{\phi(\psi(s))} \cdot F_{\psi(n)} \cdot G_n = z_{\phi(\theta(n))}^{\phi(\psi(s))} \cdot F_{\theta(n)} \cdot H_n.
  $$
  Using the fact that $F : Y \to Z$ is a transformation as the next step we have
  $$
    F_{\psi(s)} \cdot y_{\psi(n)}^{\psi(s)} \cdot G_n = F_{\psi(s)} \cdot y_{\theta(n)}^{\psi(s)} \cdot H_n.
  $$
  Finally, $F_{\psi(s)}$ is mono as a morphism in $\CC$, whence
  $$
    y_{\psi(n)}^{\psi(s)} \cdot G_n = y_{\theta(n)}^{\psi(s)} \cdot H_n.
  $$
  This shows that $(G, \psi) \approx (H, \theta)$.
\end{proof}

The category $\CC$ embeds fully into $\sigma_0\CC$ as follows.
For $A \in \Ob(\CC)$ let $\overline A = (A, \id_A)_{n \le m \in \omega}$ denote the constant sequence such that
$A_n = A$ for all $n \in \omega$ and $a_n^m = \id_A$ for all $n \le m \in \omega$.
Every morphism $f \in \hom_\CC(A, B)$ gives rise to a unique transformation
$\overline f = (\Const_f, \id_\omega)$ where $(\Const_f)_n = f$, $n \in \omega$.
It is easy to check that $J : \CC \to \sigma_0\CC : A \mapsto \overline A : f \mapsto \overline f / \Boxed{\approx}$
is indeed a full functor injective on objects, and hence an embedding~\cite{kubis-fraisse-seq}.

It is a bit technical but easy to show (see~\cite{kubis-fraisse-seq}) that $\sigma_0\CC$ is a cocompletion of $\CC$:
for every sequence $X = (X_n, x_n^m)_{n \le m \in \omega}$ in $\CC$ we have simply added a formal colimit
to $\sigma_0\CC$ and adjusted the morphisms so that $X$ is the colimit of $J(X)$ in $\sigma_0\CC$. More
precisely, the following holds in $\sigma_0\CC$ (see~\cite{kubis-fraisse-seq} for details):
$$
  X = \operatorname{colim}(\overline{X_0} \overset{\overline x_0^1}\longrightarrow \overline{X_1} \overset{\overline x_1^2 }\longrightarrow \cdots)
$$
Clearly, every element in $\Ob(\sigma_0\CC)$ is a colimit of a sequence in~$\CC$.

It comes as no surprise that weak \Fraisse\ sequences should demonstrate a certain level of homogeneity
in $\sigma_0\CC$ with respect to objects from $J(\CC)$.
In the setting of weak \Fraisse\ theory the corresponding notion is referred to as weak homogeneity.

Let $\CC$ be a locally small category whose morphisms are mono, let $\DD$ be a full subcategory of $\CC$ and let
$S \in \Ob(\CC)$. We say that $S$ is \emph{weakly homogeneous for $\DD$} \cite{kubis-weak-fraisse-cat}
if for every $A \in \Ob(\DD)$ and every $f \in \hom(A, S)$ there is a $B \in \Ob(\DD)$, an $e \in \hom(A, B)$
and an $i \in \hom(B, S)$ such that
\begin{description}
  \item[(W1)] $f = i \cdot e$, and
  \item[(W2)] for every $j \in \hom(B, S)$ there is an $h \in \Aut(S)$ such that $i \cdot e = h \cdot j \cdot e$:
        \begin{center}
            \begin{tikzcd}[row sep=tiny]
                   & B \arrow[r, "i"] & S \\
             A \arrow[ur, "e"] \arrow[dr, "e"'] \arrow[rru, bend left=45, "f"] \\
                   & B \arrow[r, "j"'] & S \arrow[uu, "h"']
            \end{tikzcd}
      \end{center}
\end{description}

\begin{THM}\cite{kubis-weak-fraisse-cat}\label{srd-wap.thm.kubis-weak-fraisse}
  Let $\CC$ be a category.
  
  $(a)$ $\CC$ is a weak \Fraisse\ category if and only if there is a weak \Fraisse\ sequence in~$\CC$.

  $(b)$ A category may have, up to isomorphism, at most one weak \Fraisse\ sequence.

  $(c)$ If $\CC$ is a weak \Fraisse\ category and $W$ a weak \Fraisse\ sequence in $\CC$ then
  $W$ as an object of $\sigma_0\CC$ is weakly homogeneous for $\CC$.
\end{THM}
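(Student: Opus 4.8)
The plan is to treat all three parts as back-and-forth arguments carried out inside the cocompletion $\sigma_0\CC$, in the style of weak \Fraisse\ theory. Two standing facts do most of the work. First, for a sequence $X=(X_n,x_n^m)_{n\le m}$ we have $X=\operatorname{colim}(\overline{X_0}\to\overline{X_1}\to\cdots)$ in $\sigma_0\CC$, so there are colimit injections $\iota_n\colon\overline{X_n}\to X$ with $\iota_m\cdot\overline{x_n^m}=\iota_n$ for $n\le m$. Second (``compactness''), a morphism of $\sigma_0\CC$ out of an object of the form $\overline A$ with $A\in\Ob(\CC)$ — or, more generally, out of a stage $\overline{X_m}$ of a sequence — factors through one of the injections $\iota_n$ of its target colimit; this is immediate from the definition of morphisms in $\sigma_0\CC$, since a transformation out of a constant sequence is determined by a single component and composing $\overline A$ with the colimit cocone recovers that component. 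Below write $W=\operatorname{colim}_n\overline{W_n}$ with injections $\iota_n\colon\overline{W_n}\to W$, and recall that $J\colon\CC\to\sigma_0\CC$ is a full embedding and that all morphisms of $\sigma_0\CC$ are mono by Lemma~\ref{srd-wap.lem.monos}.

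For the ``if'' direction of (a), suppose $W=(W_n,w_n^m)$ is a weak \Fraisse\ sequence. Directedness of $\CC$ follows from cofinality of $\{W_n\}$ and directedness of $\omega$. For weak amalgamation, given $A$ choose $a\in\hom_\CC(A,W_n)$, apply the sequence condition to $n$ to get $m\ge n$, and check that $f:=w_n^m\cdot a$ is an amalgamation arrow: for $g\in\hom_\CC(W_m,B)$ and $h\in\hom_\CC(W_m,C)$, two applications of the sequence condition (followed by composing with transition maps to reach a common index $l$) give $g'\in\hom_\CC(B,W_l)$ and $h'\in\hom_\CC(C,W_l)$ with $g'\cdot g\cdot w_n^m=w_n^l=h'\cdot h\cdot w_n^m$, hence $g'\cdot g\cdot f=h'\cdot h\cdot f$. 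Finally $\{W_n\}$ weakly dominates $\CC$: it is cofinal, and for each $W_n$ the sequence condition supplies $m\ge n$ with precisely the required absorption property of $j:=w_n^m$. For the ``only if'' direction one must construct $W$. Enumerate the objects and morphisms of a countable weakly dominating subcategory $\DD$ and build $W$ by a bookkeeping recursion along $\omega$ interleaving three tasks: (i) mapping the current object into some $D\in\Ob(\DD)$ lying above a prescribed object of $\DD$ (cofinality, via directedness); (ii) for each $W_n$ already built, installing an index $m\ge n$ witnessing the absorption property — here weak domination is applied to an object of $\DD$ above $W_n$, and amalgamation arrows are used to pre-compose so that the absorbed morphisms return to the sequence; (iii) inserting amalgamation arrows to keep future amalgamations feasible. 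A diagonalization ensures every instance is eventually served, so $W$ is a weak \Fraisse\ sequence, and part (a) is done; part (b) then follows from (a) together with the back-and-forth below.

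For (b), given weak \Fraisse\ sequences $W=(W_n,w_n^m)$ and $V=(V_n,v_n^m)$, build by back-and-forth two interleaved chains $W_{n_0}\to V_{m_0}\to W_{n_1}\to V_{m_1}\to\cdots$ of $\CC$-morphisms in which each triangle commutes after post-composition with a suitable transition map — forward steps use the absorption clause of $V$, backward steps that of $W$ — so that the induced transformations $W\to V$ and $V\to W$ have composites $\approx$-equal to the identities (the witnessing diagrams for $\approx$ are exactly the commuting triangles just produced); hence $W\cong V$ in $\sigma_0\CC$. For (c), fix $A\in\Ob(\CC)$ and $f\in\hom(\overline A,W)$; by compactness $f=\iota_n\cdot\overline a$ for some $n$ and $a\in\hom_\CC(A,W_n)$. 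Apply the sequence condition to $n$ to obtain the index $m\ge n$, and set $B:=W_m$, $e:=\overline{w_n^m\cdot a}\in\hom(\overline A,\overline B)$ (the image under $J$ of $w_n^m\cdot a\in\hom_\CC(A,W_m)$), and $i:=\iota_m\in\hom(\overline B,W)$. Then (W1) holds, since $i\cdot e=\iota_m\cdot\overline{w_n^m}\cdot\overline a=\iota_n\cdot\overline a=f$. For (W2), let $j\in\hom(\overline B,W)$; by compactness, and composing with transition maps, $j=\iota_p\cdot\overline b$ with $p\ge m$ and $b\in\hom_\CC(W_m,W_p)$, and using the defining property of this same $m$ applied to the morphism $b$ (and enlarging the output index) yields $k\ge p$ and $g\in\hom_\CC(W_p,W_k)$ with $g\cdot b\cdot w_n^m=w_n^k$. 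A back-and-forth between $W$ and itself — seeded at its first forward step by $g$ and closed up by repeated use of the absorption clauses of $W$, just as in (b) — produces $h\in\Aut_{\sigma_0\CC}(W)$ with $h\cdot\iota_p=\iota_k\cdot\overline g$, and then
\[
  h\cdot j\cdot e=h\cdot\iota_p\cdot\overline{b\cdot w_n^m\cdot a}=\iota_k\cdot\overline{g\cdot b\cdot w_n^m\cdot a}=\iota_k\cdot\overline{w_n^k\cdot a}=\iota_n\cdot\overline a=f=i\cdot e,
\]
which is (W2).

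The main obstacle throughout is the bookkeeping in the back-and-forth: in the ``only if'' half of (a), arranging the enumeration so that every cofinality and every absorption requirement is eventually discharged; and in (b) and (c), verifying that the two transformations manufactured step by step really are mutually inverse in $\sigma_0\CC$, i.e.\ that the triangles produced at successive stages assemble into the loose commuting diagrams that witness the congruence $\approx$. In (c) there is the extra wrinkle that the back-and-forth cannot be run freely but must be started from the prescribed morphism $g$; the relation $g\cdot b\cdot w_n^m=w_n^k$ is exactly the compatibility that lets the ladder close and delivers the intertwining $h\cdot\iota_p=\iota_k\cdot\overline g$, and the preliminary passage through $e$ is indispensable, since a weak \Fraisse\ limit need not be ultrahomogeneous for $B=W_m$ on the nose (so $h\cdot j=i$ would be too much to ask).
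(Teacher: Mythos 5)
First, a remark on scope: the paper does not prove Theorem~\ref{srd-wap.thm.kubis-weak-fraisse} at all --- it is imported from Kubi\'s \cite{kubis-weak-fraisse-cat} --- so there is no in-paper argument to compare against; your proposal has to be judged against the source. Measured that way, your overall route is the standard one. The ``if'' half of (a) (directedness, weak amalgamation via the absorption clause, weak domination by the subcategory spanned by $\{W_n\}$), the factorization of any morphism $\overline A\to W$ in $\sigma_0\CC$ through some colimit injection $\iota_n$, and the back-and-forth scheme for (b) are all correct in outline. The ``only if'' half of (a) and the verification that the approximately commuting ladders really assemble into morphisms of $\sigma_0\CC$ are left as sketches; the latter is a genuine technical point (a family $\alpha_i$ satisfying naturality only after precomposition with earlier transition maps is not yet a transformation in $\sigma_1\CC$, and must be padded with transition maps on a cofinal subsequence before one can even speak of its $\approx$-class), but you flag it honestly and it is repairable by the standard device.

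The genuine gap is in (c), at the single step the whole argument leans on: the claim that a back-and-forth seeded by $g$ yields $h\in\Aut_{\sigma_0\CC}(W)$ with $h\cdot\iota_p=\iota_k\cdot\overline g$. That identity says $h$ restricts \emph{on the nose} to $g$ on the canonical copy of $W_p$ in $W$, i.e.\ that $g$ itself extends to an automorphism of $W$. This is an ultrahomogeneity-type conclusion, and it is precisely what weak \Fraisse\ sequences fail to deliver: already the first forward rung of such a ladder would require some $\gamma\in\hom(W_{p_1},W_{k_1})$ with $\gamma\cdot w_p^{p_1}=w_{k}^{k_1}\cdot g$ \emph{strictly}, i.e.\ a strict amalgamation of $w_p^{p_1}$ and $g$ over $W_p$ realized inside the sequence --- available in the classical \Fraisse\ setting, but under weak amalgamation obtainable only after precomposing with an amalgamation arrow. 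What the weak back-and-forth actually produces is the lagged identity $h\cdot\iota_p\cdot\overline{b\cdot w_n^m}=\iota_k\cdot\overline{g\cdot b\cdot w_n^m}=\iota_n$, where $b\cdot w_n^m$ is the arrow along which $g$ was obtained from the absorption clause. Fortunately this weaker identity is all your final display uses, since $j\cdot e$ factors through $\overline{b\cdot w_n^m}$; so the proof is repairable by replacing the stated intermediate goal with the correct one and then actually running the ladder (tracking which indices are ``good'' for absorption at each stage). As written, however, the crux of (c) asserts something that is false in general for weakly homogeneous limits --- indeed, if it were true, $W$ would be ultrahomogeneous for the $W_m$'s, which is exactly the conclusion your own closing remark correctly warns against.
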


In case $\CC$ is a class of finite first-order structures of the same first-order signature $\Theta$, we can think of $\CC$ as
a category where embeddings serve as morphisms, and in this particular case we can take $\sigma_0\CC$ to
be the class of all structures isomorphic to the unions of countable chains in~$\CC$.
Recall that the morphisms between sequences in $\sigma_0\CC$ are defined in a rather convoluted manner
to ensure that in the context of first-order structures a morphism from a sequence $X$ to a sequence $Y$
correspond uniquely to an embedding of the colimit of~$X$ into the colimit of~$Y$.

\section{Weak homogeneity and precompact expansions}
\label{srd-wap.sec.weak-hom-precomp-exp}

The purpose of this section is to prove a generalization of Theorem~\ref{thm-zucker}
where ultrahomogeneity is replaced by weak homogeneity.
Interestingly, the proof as we have presented it in~\cite{masul-kpt} remains largely the same, so
we cover only the differences here. We strongly suggest the reader to have a copy of the proof given in~\cite{masul-kpt}
at hand while reading this section.


\begin{LEM}\label{crt.lem.EP-1-weak}
  Let $U : \CC^* \to \CC$ be a reasonable expansion with unique restrictions.
  Let $F$ be a locally finite object of $\CC$ which is weakly homogeneous for its age
  and let $G = \Aut(F)$. Then for $F^*, F_1^* \in U^{-1}(F)$ we have
  $F_1^* \in \overline{F^*\cdot G}$ (where the closure is computed in $\sigma_F$) if and only if
  $\Age(F_1^*) \subseteq \Age(F^*)$.
\end{LEM}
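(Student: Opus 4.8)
The plan is to argue directly from the definitions of the topology $\sigma_F$, of the logical action of $G=\Aut(F)$ on $U^{-1}(F)$, and of weak homogeneity; the one recurring device is that restrictions may be computed in stages (whenever $i\cdot p$ is defined, $\restr{F_1^*}{i\cdot p}$ equals the restriction along $p$ of $\restr{F_1^*}{i}$, which is immediate from uniqueness of restrictions). First I would record one translation: for $g\in G$, for $A^*\in\Ob(\CC^*)$ with $A:=U(A^*)\in\Ob(\CCfin)$, and for $e\in\hom(A,F)$, one has $F^*\cdot g\in N(e,A^*)$ if and only if $g\cdot e\in\hom(A^*,F^*)$. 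Indeed $g\in\hom(F^*\cdot g,F^*)$ by definition of the action, so composing along $g$ and appealing to uniqueness of restrictions gives $\restr{F^*}{g\cdot e}=\restr{(F^*\cdot g)}{e}$, and the left side equals $A^*$ exactly when $g\cdot e\in\hom(A^*,F^*)$ while the right side equals $A^*$ exactly when $F^*\cdot g\in N(e,A^*)$. Since the sets $N(e,A^*)$ form a base for $\sigma_F$ (this is where local finiteness of $F$ enters, making $\sigma_F$ a genuine topology), the assertion $F_1^*\in\overline{F^*\cdot G}$ reduces to: for every finite $A^*$ and every $e\in\hom(A^*,F_1^*)$ there is $g\in G$ with $g\cdot e\in\hom(A^*,F^*)$.

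The forward implication is then essentially formal. If $A^*\in\Age(F_1^*)$, choose $e\in\hom(A^*,F_1^*)$; then $N(e,A^*)$ is a basic neighbourhood of $F_1^*$ and hence meets $F^*\cdot G$, so by the translation there is $g\in G$ with $g\cdot e\in\hom(A^*,F^*)$, whence $A^*\to F^*$ in $\CC^*$ and $A^*\in\Age(F^*)$. Neither weak homogeneity nor local finiteness is needed here.

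For the reverse implication, assume $\Age(F_1^*)\subseteq\Age(F^*)$ and fix $e\in\hom(A^*,F_1^*)$ with $A=U(A^*)\in\Ob(\CCfin)$; then $A\in\Age(F)$. Apply weak homogeneity of $F$ to the morphism $e\in\hom(A,F)$ to obtain $B\in\Age(F)$, $e'\in\hom(A,B)$ and $i\in\hom(B,F)$ with $e=i\cdot e'$ and property (W2). The key point is that $B$ is a \emph{finite} object and $i$ realizes it inside $F_1^*$: putting $B_1^*:=\restr{F_1^*}{i}$ we have $i\in\hom(B_1^*,F_1^*)$ with $U(B_1^*)=B\in\Ob(\CCfin)$, so $B_1^*\in\Age(F_1^*)\subseteq\Age(F^*)$; and since $\restr{F_1^*}{e}=A^*$ while the restriction along $e=i\cdot e'$ may be computed in two stages, $\restr{B_1^*}{e'}=A^*$, i.e.\ $e'\in\hom(A^*,B_1^*)$. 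As $B_1^*\in\Age(F^*)$, choose $j\in\hom(B_1^*,F^*)$; its underlying $\CC$-morphism lies in $\hom(B,F)$, and composing $e'$ with $j$ in $\CC^*$ yields $j\cdot e'\in\hom(A^*,F^*)$. Now feed $j$ into (W2): there is $h\in\Aut(F)$ with $e=i\cdot e'=h\cdot j\cdot e'$, so $g:=h^{-1}$ satisfies $g\cdot e=j\cdot e'\in\hom(A^*,F^*)$, and by the translation $F^*\cdot g\in N(e,A^*)$. Thus the arbitrary basic neighbourhood $N(e,A^*)$ of $F_1^*$ meets $F^*\cdot G$, as required.

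The step I expect to be the genuine obstacle is precisely this choice in the reverse direction. The tempting but circular move is to use only $A^*\in\Age(F^*)$ together with the reasonableness of $U$ to manufacture \emph{some} expansion $B^*$ of $B$ with $e'\in\hom(A^*,B^*)$; but then nothing forces $B^*\to F^*$, and the argument loops back on itself. The escape is to apply weak homogeneity to $e$ regarded as a morphism into $F$ in the ambient category, which produces a finite $B$ \emph{together with an actual embedding $i$ of $B$ into $F_1^*$}; this is exactly what lets the hypothesis $\Age(F_1^*)\subseteq\Age(F^*)$ be invoked at the level of the larger object $B_1^*$ rather than merely at $A^*$. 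After that the proof is routine, the only remaining care being to keep straight which compositions are formed in $\CC$ and which in $\CC^*$ under the identification $\hom(X^*,Y^*)\subseteq\hom(U(X^*),U(Y^*))$.
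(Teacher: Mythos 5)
Your proof is correct and follows essentially the same route as the paper's: in the reverse direction you apply weak homogeneity to factor $e = i\cdot e'$ through a finite $B$, restrict $F_1^*$ along $i$ to get $B_1^*\in\Age(F_1^*)\subseteq\Age(F^*)$, pick $j\in\hom(B_1^*,F^*)$, and use (W2) to produce the automorphism placing a translate of $F^*$ in $N(e,A^*)$ — which is exactly the paper's argument (with your $e',j,h$ playing the roles of its $h,f,g$). The forward direction, which the paper delegates to a citation, is handled correctly by your translation $F^*\cdot g\in N(e,A^*)\iff g\cdot e\in\hom(A^*,F^*)$.
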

\begin{proof}
  $(\Rightarrow)$
  The same as the proof of direction $(\Rightarrow)$ in \cite[Lemma~5.11]{masul-kpt}.
  
  $(\Leftarrow)$
  Assume that $\Age(F_1^*) \subseteq \Age(F^*)$ and let us show that every neighborhood of $F_1^*$
  intersects $F^* \cdot G$. Let $N(e, A^*)$ be a neighborhood of $F_1^*$.
  Then $e \in \hom_{\CC^*}(A^*, F_1^*)$ and $e \in \hom_{\CC}(A, F)$. 
  Our intention now is to show that $e \in \hom_{\CC^*}(A^*, F^* \cdot g)$ for some $g \in G$.
  
  Since $F$ is weakly homogeneous
  there exists a $B \in \Ob(\CCfin)$ and morphisms $h \in \hom_\CC(A, B)$ and $i \in \hom_\CC(B, F)$
  such that $e = i \cdot h$. Let $B_1^* = \restr{F_1^*}{i}$. Then $B_1^* \in \Age(F_1^*) \subseteq \Age(F^*)$, so
  there exists a morphism $f \in \hom(B_1^*, F^*)$. Since $F$ is weakly homogeneous, there is a $g \in \Aut(F)$
  such that $g \cdot f \cdot h = i \cdot h = e$:
  \begin{center}
    \begin{tikzcd}[row sep=tiny]
           & B \arrow[r, "i"] & F \\
     A \arrow[ur, "h"] \arrow[dr, "h"'] \arrow[rru, bend left=45, "e"] \\
           & B \arrow[r, "f"'] & F \arrow[uu, "g"']
    \end{tikzcd}
\end{center}
  But then (see \cite[Lemma 5.1~(c)]{masul-kpt}):
\begin{center}
    \begin{tikzcd}
        A^* \arrow[r, "h"'] \arrow[rrr, bend left=20, "e"] \arrow[d, dashed, mapsto, "U"']
      & B_1^* \arrow[r, "f"'] \arrow[d, dashed, mapsto, "U"']
      & F^* \arrow[r, "g"'] \arrow[d, dashed, mapsto, "U"']
      & F^* \cdot g^{-1} \arrow[d, dashed, mapsto, "U"]
    \\
      A \arrow[r, "h"] \arrow[rrr, bend right=20, "e"'] & B \arrow[r, "f"] & F \arrow[r, "g"] & F
    \end{tikzcd}
\end{center}
  whence follows that $F^* \cdot g^{-1} \in N(e, A^*)$.
\end{proof}

\begin{PROP}\label{crt.prop.EP-2-weak}
  Let $U : \CC^* \to \CC$ be a reasonable precompact expansion with unique restrictions.
  Let $F$ be a locally finite weakly homogeneous object in $\CC$ and assume that $U^{-1}(F)$ is compact
  with respect to the topology $\sigma_F$.
  Let $G = \Aut(F)$ and let $F^* \in U^{-1}(F)$ be arbitrary. Then
  $\restr{U}{\Age(F^*)} : \Age(F^*) \to \Age(F)$ has the expansion property if and only if
  $\Age(F^*) = \Age(F_1^*)$ for all $F_1^* \in \overline{F^* \cdot G}$.
\end{PROP}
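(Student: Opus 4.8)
The plan is to exploit Lemma~\ref{crt.lem.EP-1-weak}, which identifies $\overline{F^*\cdot G}$ with $\{F_1^*\in U^{-1}(F):\Age(F_1^*)\subseteq\Age(F^*)\}$, so the right-hand condition ``$\Age(F^*)=\Age(F_1^*)$ for all $F_1^*\in\overline{F^*\cdot G}$'' is just the assertion that no strictly smaller age is realized by an expansion of $F$, i.e.\ $\Age(F^*)$ is minimal among $\{\Age(F_1^*):F_1^*\in U^{-1}(F)\}$. Thus the proposition becomes: the expansion property for $\restr U{\Age(F^*)}$ holds if and only if $\Age(F^*)$ is minimal in this sense. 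This is precisely the form of the corresponding statement in~\cite{masul-kpt}, and I would follow that proof, inserting the weak-homogeneity substitutes where ultrahomogeneity was used.

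First I would handle $(\Leftarrow)$. Assume $\Age(F^*)$ is minimal. Recall from the characterization quoted in the preliminaries that, since $\Age(F^*)$ is directed (it is a full subcategory of $\CCfin$ closed under the relevant maps — one checks directedness using that $F$ is locally finite), all morphisms are mono, and $\restr U{\Age(F^*)}$ is a reasonable expansion with unique restrictions and finite fibers, the expansion property is equivalent to: for every $D^*\in\Ob(\Age(F^*))$ there is $B\in\Age(F)$ with $D^*\to B^*$ for all $B^*\in U^{-1}(B)\cap\Age(F^*)$. So fix $D^*$, pick $e\in\hom(D,F)$, and let $D^*=\restr{F^*}{e}$ (using minimality to ensure $D^*$ really occurs under $F^*$; more precisely $D^*\in\Age(F^*)$ is given). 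The obstruction to finding such a $B$ would be an infinite ``bad'' tree of expansions avoiding $D^*$; by precompactness ($U^{-1}(B)$ finite) and compactness of $U^{-1}(F)$ under $\sigma_F$, a compactness/K\"onig's-lemma argument produces a limit point $F_1^*\in U^{-1}(F)$ with $D^*\notin\Age(F_1^*)$ but $\Age(F_1^*)\subseteq\Age(F^*)$ — and then $F_1^*\in\overline{F^*\cdot G}$ by Lemma~\ref{crt.lem.EP-1-weak}, forcing $\Age(F_1^*)=\Age(F^*)\ni D^*$, a contradiction. Here weak homogeneity is used exactly as in Lemma~\ref{crt.lem.EP-1-weak} to pass between ``$D^*$ embeds under some conjugate of $F^*$'' and membership in neighborhoods.

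For $(\Rightarrow)$, suppose some $F_1^*\in\overline{F^*\cdot G}$ has $\Age(F_1^*)\subsetneq\Age(F^*)$; pick $D^*\in\Age(F^*)\setminus\Age(F_1^*)$ with witnessing $d\in\hom(D,F)$, $D^*=\restr{F^*}{d}$. By the expansion property applied to $D$, there is $B\in\Age(F)$ such that $D^*\to B^*$ for every $B^*\in U^{-1}(B)\cap\Age(F^*)$. Choose $b\in\hom(B,F)$ and set $B_1^*=\restr{F_1^*}{b}\in\Age(F_1^*)\subseteq\Age(F^*)$; since $B_1^*\in U^{-1}(B)\cap\Age(F^*)$, we get $D^*\to B_1^*\to F_1^*$ (composing with $b$), so $D^*\in\Age(F_1^*)$ — contradiction. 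This direction is short and does not really need compactness, only that $\Age(F_1^*)\subseteq\Age(F^*)$, which again is Lemma~\ref{crt.lem.EP-1-weak}.

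The main obstacle is the compactness argument in $(\Leftarrow)$: one must carefully organize the ``bad tree'' so that its nodes are expansions $B^*$ of larger and larger $B\in\Age(F)$ compatible along restriction maps and all avoiding an embedding of $D^*$, and then verify that the inverse-limit object genuinely lands in $U^{-1}(F)$ and genuinely omits $D^*$ from its age — this is where reasonableness, unique restrictions, precompactness, local finiteness of $F$, and the topology $\sigma_F$ all have to interlock, exactly as in~\cite{masul-kpt}. I would lift that argument verbatim, flagging only that every appeal to ultrahomogeneity is replaced by the weak-homogeneity input already packaged in Lemma~\ref{crt.lem.EP-1-weak}, so no new idea beyond that lemma is needed.
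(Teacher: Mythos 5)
Your direction $(\Rightarrow)$ is correct and is essentially the paper's argument: restrict $F_1^*$ along some $b\in\hom(B,F)$ to get $B_1^*\in\Age(F_1^*)\subseteq\Age(F^*)$ over $B$ and derive the contradiction $D^*\in\Age(F_1^*)$. The problem is in $(\Leftarrow)$. The paper does not argue by contradiction via a ``bad tree'': it fixes $A^*\in\Age(F^*)$, observes that the minimality hypothesis makes $\{X_e=\overline{F^*\cdot G}\cap N(e,A^*): e\in\hom(A,F)\}$ an open cover of the compact set $\overline{F^*\cdot G}$, extracts a finite subcover $e_0,\dots,e_{k-1}$, uses local finiteness of $F$ to amalgamate these finitely many morphisms into a single $r\in\hom(B,F)$ with $r\cdot p_i=e_i$, and then applies weak homogeneity to $r$ to obtain $C$, $h\in\hom(B,C)$ and $i\in\hom(C,F)$ satisfying (W1)--(W2); the object $C$ (not $B$) is the witness for the expansion property, and one checks via (W2) and a restriction computation that $A^*\to C^*$ for every $C^*\in\Age(F^*)$ over $C$. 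Your K\"onig's-lemma scheme has a genuine gap: to build a limit point $F_1^*$ with $D^*\notin\Age(F_1^*)$ and $\Age(F_1^*)\subseteq\Age(F^*)$ you must organize the bad expansions along a countable chain that exhausts $F$, both to have finite levels for K\"onig's lemma and, crucially, to guarantee that every morphism $D\to F$ factors through the chain so that the assembled object really omits $D^*$ from its age. No such countable cofinal chain is among the hypotheses of the proposition --- $\hom(A,F)$ may be uncountable and $F$ need not be the colimit of a chain in $\CCfin$ --- so this step does not go through as stated, whereas the finite-subcover argument needs only the assumed compactness of $U^{-1}(F)$.

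A second, related misjudgement: you assert that weak homogeneity is ``used exactly as in Lemma~\ref{crt.lem.EP-1-weak}'' and that no new idea beyond that lemma is needed. In fact the $(\Leftarrow)$ direction uses (W2) directly and in a new way: the witness object must be upgraded from the amalgam $B$ to the object $C$ supplied by weak homogeneity, and the automorphism $g$ produced by (W2) (applied to an arbitrary $s\in\hom(C^*,F^*)$) is what places $F^*\cdot g^{-1}$ back into the finite cover $\bigcup_j X_{e_j}$ and pins down the restriction $\restr{B^*}{p_i}=A^*$, yielding $A^*\to B^*\to C^*$. This is the heart of the adaptation from the ultrahomogeneous case and is precisely what your sketch leaves out.
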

\begin{proof}
  $(\Rightarrow)$
  The same as the proof of direction $(\Rightarrow)$ in~\cite[Lemma~5.12]{masul-kpt}. 

  $(\Leftarrow)$
  Assume that $\Age(F^*) \subseteq \Age(F_1^*)$ for all $F_1^* \in \overline{F^* \cdot G}$. Let
  $A^* \in \Age(F^*)$ be arbitrary and let $A = U(A^*)$. For $e \in \hom(A, F)$ let
  $$
    X_e = \overline{F^* \cdot G} \sec N(e, A^*).
  $$
  Let us show that
  $$
    \overline{F^* \cdot G} = \UNION \{ X_e : e \in \hom(A, F) \}.
  $$
  The inclusion $\supseteq$ is trivial, while the inclusion $\subseteq$ follows from the assumption. Namely,
  if $F_1^* \in \overline{F^* \cdot G}$ then $\Age(F^*) \subseteq \Age(F_1^*)$; so
  $A^* \in \Age(F_1^*)$, or, equivalently, there is a morphism
  $f \in \hom(A^*, F_1^*)$, whence $F_1^* \in X_f$.

  By construction each $X_e$ is open in $\overline{F^* \cdot G}$.
  Since $\overline{F^* \cdot G}$ is compact (as a closed subspace of the compact space $U^{-1}(F)$), there is a finite sequence
  $e_0, \ldots, e_{k-1} \in \hom(A, F)$ such that
  $$
    \overline{F^* \cdot G} = \UNION \{ X_{e_j} : j < k \}.
  $$

  Since $F$ is locally finite, there exist $B \in \Ob(\CCfin)$ and morphisms $r \in \hom(B, F)$
  and $p_i \in \hom(A, B)$ such that $r \cdot p_i = e_i$, $i < k$. Moreover, $F$ is weakly homogeneous
  so for $r \in \hom(B, F)$ there exist $C \in \Ob(\CCfin)$ and
  morphisms $h \in \hom(B, C)$ and $i \in \hom(C, F)$ such that (W1), that is $i \cdot h = r$, and (W2) are satisfied.
  Let us show that for every $C^* \in U^{-1}(C)$ we have that $A^* \toCCC C^*$.

  Take any $C^* \in \Age(F^*)$ such that $U(C^*) = C$ and let $s \in \hom(C^*, F^*)$ be any morphism.
  Then, $s \in \hom(C, F)$, so by (W2) there is a $g \in G$ such that $g \cdot s \cdot h = i \cdot h = r$.
  Furthermore, let $B^* = \restr{C^*}{h}$. Since $g \in \hom(F^*, F^* \cdot g^{-1})$ we have that
\begin{center}
    \begin{tikzcd}
        F^* \cdot g^{-1} \arrow[d, dashed, mapsto, "U"']
      & B^*          \arrow[d, dashed, mapsto, "U"'] \arrow[l, "g \cdot s \cdot h"] \arrow[r, "h"]
      & C^*          \arrow[d, dashed, mapsto, "U"'] \arrow[r, "s"']
      & F^*          \arrow[d, dashed, mapsto, "U"]  \arrow[lll, bend right=20, "g"']
    \\
      F & B \arrow[r, "h"] \arrow[l, "r"'] & C \arrow[r, "s"] & F \arrow[lll, bend left=20, "g"] 
    \end{tikzcd}
\end{center}
  In particular, $r = g \cdot s \cdot h \in \hom(B^*, F^* \cdot g^{-1})$, so $B^* \in \Age(F^* \cdot g^{-1})$.
  Now, $F^* \cdot g^{-1} \in \overline{F^* \cdot G} = \UNION \{ X_{e_j} : j < k \}$,
  so $F^* \cdot g^{-1} \in X_{e_i}$ for some~$i$. Moreover, $r \cdot p_i = e_i$ by the construction of $B$.
  Therefore:
\begin{center}
    \begin{tikzcd}
        A^*          \arrow[d, dashed, mapsto, "U"'] \arrow[r, "e_i"] 
      & F^* \cdot g^{-1} \arrow[d, dashed, mapsto, "U"'] 
      & B^*          \arrow[d, dashed, mapsto, "U"]  \arrow[l, "r"']
    \\
      A \arrow[rr, bend right=20, "p_i"'] \arrow[r, "e_i"] & F & B \arrow[l, "r"']
    \end{tikzcd}
\end{center}
  Let $A_1^* = \restr{B^*}{p_i}$. Since $B^* = \restr{F^* \cdot g^{-1}}{r}$ we have
  $A_1^* = \restr{(\restr{F^* \cdot g^{-1}}{r})}{p_i} = \restr{F^* \cdot g^{-1}}{r \cdot p_i} =
  \restr{F^* \cdot g^{-1}}{e_i} = A^*$. Consequently, $p_i \in \hom(A^*, B^*)$ which, together with
  $h \in \hom(B^*, C^*)$ concludes the proof that $A^* \toCCC C^*$.
\end{proof}

Putting it all together and having in mind parts of the proof from~\cite{masul-kpt} that do not depend on $F$ being homogeneous
we finally get the following:

\begin{THM}\label{crt.thm.akpt-main2}
  Let $\CC$ be a locally small category and let $\CCfin$ be a full subcategory of $\CC$ such that (C1) -- (C5) hold.
  Let $F \in \Ob(\CC)$ be a weakly homogeneous locally finite object, and let $\AAA$ be the full subcategory of
  $\CCfin$ spanned by $\Age(F)$. Then the following are equivalent:
  
  (1) $\AAA$ has finite Ramsey degrees.
  
  (2) There is a reasonable precompact expansion with unique restrictions $U : \CC^* \to \CC$ and
  a full subcategory $\AAA^*$ of $\CC^*$ which is directed, has the Ramsey property and
  $\restr{U}{\AAA^*} : \AAA^* \to \AAA$ has the expansion property.
\end{THM}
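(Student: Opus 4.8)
The plan is to transport the proof of Theorem~\ref{thm-zucker} from~\cite{masul-kpt} essentially unchanged, replacing the two places where ultrahomogeneity of $F$ is invoked — and, as one should check, there are exactly two, namely \cite[Lemma~5.11]{masul-kpt} and \cite[Lemma~5.12]{masul-kpt} — by Lemma~\ref{crt.lem.EP-1-weak} and Proposition~\ref{crt.prop.EP-2-weak}. Below I sketch the skeleton so that it is clear where these substitutions take place.

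For $(2)\Rightarrow(1)$ I would argue, as in~\cite{masul-kpt} and without ever touching $F$, that $t_\CC(A) \le |U^{-1}(A)|$ for all $A \in \Ob(\AAA)$, which in particular forces the degrees to be finite. Given $A, B \in \Ob(\AAA)$ and $k \in \NN$, fix $B^* \in U^{-1}(B)$, enumerate the finitely many expansions $A_0^*,\dots,A_{m'-1}^*$ of $A$ that admit a morphism into $B^*$, iterate the Ramsey property of $\AAA^*$ (using directedness of $\AAA^*$ to retain a common target) to obtain $C^* \in \Ob(\AAA^*)$ with $C^* \longrightarrow (B^*)^{A_i^*}_k$ for all $i<m'$ simultaneously, and then read off — using that unique restrictions split $\hom(A,B)$ as $\bigcup_{A^*\in U^{-1}(A)}\hom(A^*,B^*)$ — that $C := U(C^*)$ witnesses $C \longrightarrow (B)^A_{k,|U^{-1}(A)|}$. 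The expansion property plays no role in this implication.

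For $(1)\Rightarrow(2)$ the plan is as follows. First, invoke the construction of~\cite{masul-kpt} — which uses only (C1)--(C5) and local finiteness of $F$, not homogeneity — to obtain a reasonable precompact expansion $U:\CC^*\to\CC$ with unique restrictions in which $U^{-1}(F)$, realized as the inverse limit of the finite sets $U^{-1}(A)$, $A\toCC F$, along restriction maps, is compact in the topology $\sigma_F$ and carries a continuous logical action of $G:=\Aut(F)$. Since $U^{-1}(F)$ is a nonempty compact $G$-space, use Zorn's lemma to pick $F^*\in U^{-1}(F)$ whose orbit closure $M:=\overline{F^*\cdot G}$ (computed in $\sigma_F$) is a minimal $G$-subflow, and let $\AAA^*$ be the full subcategory of $\CC^*$ spanned by $\Age(F^*)$; reasonableness and unique restrictions give $U(\Ob(\AAA^*))=\Age(F)=\Ob(\AAA)$, so $\restr{U}{\AAA^*}:\AAA^*\to\AAA$ is an expansion. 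It then remains to verify three things. \emph{Directedness} of $\AAA^*$ follows from local finiteness of $F$: any $A^*,B^*\in\Ob(\AAA^*)$ carry morphisms into $F^*$, local finiteness produces a $D\in\Ob(\CCfin)$ with $r\in\hom(D,F)$ through which these factor, and $D^*:=\restr{F^*}{r}\in\Age(F^*)$ receives both $A^*$ and $B^*$ by unique restrictions. The \emph{expansion property} is where Lemma~\ref{crt.lem.EP-1-weak} and Proposition~\ref{crt.prop.EP-2-weak} enter: for any $F_1^*\in M$ minimality gives $M=\overline{F_1^*\cdot G}$, so $F^*$ and $F_1^*$ lie in each other's orbit closures and two applications of Lemma~\ref{crt.lem.EP-1-weak} (valid since $F$ is locally finite and weakly homogeneous for its age) force $\Age(F_1^*)=\Age(F^*)$; hence $\Age(F^*)=\Age(F_1^*)$ for every $F_1^*\in\overline{F^*\cdot G}$, and Proposition~\ref{crt.prop.EP-2-weak} — whose hypotheses, namely $U^{-1}(F)$ compact and $F$ locally finite and weakly homogeneous, are in force — delivers the expansion property for $\restr{U}{\Age(F^*)}=\restr{U}{\AAA^*}$. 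Finally, the \emph{Ramsey property} of $\AAA^*$ is deduced from the finiteness of $t_\AAA(A)$ exactly as in~\cite{masul-kpt}: the at most $t_\AAA(A)$ unavoidable color classes of a coloring of $\hom(A^*,C)$ get separated by the expansion data carried by a point of the minimal flow $M$, collapsing the Ramsey degree of each $A^*$ in $\AAA^*$ to $1$; this argument, too, never uses homogeneity.

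The main obstacle is not any individual step but the bookkeeping of the first paragraph: one must be confident that in~\cite{masul-kpt} ultrahomogeneity is used \emph{only} inside \cite[Lemma~5.11]{masul-kpt} and \cite[Lemma~5.12]{masul-kpt} — so that the ambient-expansion construction, the compactness of $U^{-1}(F)$, the passage from finite degrees to the Ramsey property of $\AAA^*$, and the $(2)\Rightarrow(1)$ bound are all genuinely homogeneity-free — since only then can Lemma~\ref{crt.lem.EP-1-weak} and Proposition~\ref{crt.prop.EP-2-weak} be dropped in verbatim where the old lemmas stood.
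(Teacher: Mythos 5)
Your overall strategy coincides with the paper's: rerun the proof of Theorem~\ref{thm-zucker} from \cite{masul-kpt}, substituting Lemma~\ref{crt.lem.EP-1-weak} and Proposition~\ref{crt.prop.EP-2-weak} for \cite[Lemmas 5.11 and 5.12]{masul-kpt}, which are indeed the only two places where ultrahomogeneity enters. Your $(2)\Rightarrow(1)$ bound, the directedness of $\AAA^*$ via local finiteness and unique restrictions, and the derivation of the expansion property (minimality of the orbit closure plus two applications of Lemma~\ref{crt.lem.EP-1-weak}, then Proposition~\ref{crt.prop.EP-2-weak}) all match the intended argument.

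There is, however, one concrete misstep in your sketch of $(1)\Rightarrow(2)$: you choose $F^*$ by applying Zorn's lemma to all of $U^{-1}(F)$, i.e.\ you take an arbitrary minimal $G$-subflow of the full expansion space. That is not enough. The space $U^{-1}(F)$ consists of all families of colorings $\theta_A : \hom(A,F) \to t_\AAA(A)$ and in particular contains the constant expansion $\theta_A \equiv 0$; this point is fixed by the logical action, hence is by itself a minimal subflow, and its age is isomorphic to $\AAA$ as a category, which by hypothesis need not have the Ramsey property. The Ramsey property of $\AAA^*$ comes precisely from the fact that the colorings carried by $F^*$ refine (limits of) \emph{essential} colorings, so $F^*$ must be chosen inside $\overline{F_\gamma \cdot G}$, where $\gamma = (\gamma_A)_{A \in \Ob(\AAA)}$ is the family of essential colorings, and the orbit closure is minimized only within that subflow. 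This is how \cite{masul-kpt} (and the paper's recapitulation) sets it up; your remark about ``unavoidable color classes'' shows you have the right mechanism in mind, but as written your choice of $F^*$ does not guarantee it. With $F^*$ chosen correctly the rest of your outline goes through.
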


      For reader's convenience we conclude the section with the quick recapitulation of the construction of $\CC^*$ and $\AAA^*$.
      The category $\CC^*$ is constructed from $\CC$ by adding structure to objects of $\CC$. The language convenient for
      the efficient description of the additional structure is that of essential colorings which capture the small Ramsey degree of
      an object by identifying the ``unavoidable coloring'' (see~\cite{zucker1} and also~\cite{vanthe-finramdeg}).
      Given a locally finite $F \in \Ob(\CC)$, a coloring $\lambda : \hom(A, B) \to t$, $t \ge 2$, is \emph{essential at $B$} if
      for every coloring $\chi : \hom(A, F) \to k$ there is a $w \in \hom(B, F)$
      such that $\ker \lambda \subseteq \ker \chi^{(w)}$, where $\chi^{(w)}(f) = \chi(w \cdot f)$.
      A coloring $\gamma : \hom(A, F) \to t$, $t \ge 2$, is \emph{essential} if
      for every $B \in \Ob(\AAA)$ such that $A \toCC B$ and every $w \in \hom(B, F)$
      the coloring $\gamma^{(w)} : \hom(A, B) \to t$ is essential at~$B$.
      The key observation now is that for every $A \in \Ob(\AAA)$ there is an essential coloring
      $\gamma_A : \hom(A, F) \to t_\AAA(A)$~(see~\cite{zucker1,vanthe-finramdeg} for the original statement
      and \cite{masul-kpt} for the proof of the result in the categorical setting).

      Let $\CC^*$ be the category whose objects are pairs $C_\theta = (C, \theta)$ where
      $C \in \Ob(\CC)$ and $\theta = (\theta_A)_{A \in \Ob(\AAA)}$ is a family of colorings
      $$
          \theta_A : \hom(A, C) \to t_\AAA(A)
      $$
      indexed by the objects of $\AAA$. Morphisms in $\CC^*$ are morphisms from $\CC$ that preserve colorings. More precisely,
      $f$ is a morphism from $C_\theta = (C, \theta)$ to $D_\delta = (D, \delta)$ in $\CC^*$
      if $f \in \hom(C, D)$ and
      $$
          \delta(f \cdot e) = \theta(e), \text{ for all } e \in \UNION_{A \in \Ob(\AAA)} \hom(A, C).
      $$
      Let $U : \CC^* \to \CC$ be the obvious forgetful functor $(C, \theta) \mapsto C$ and $f \mapsto f$.
      This is a reasonable precompact expansion with unique restrictions~\cite{masul-kpt}.

      Let $\gamma = (\gamma_A)_{A \in \Ob(\AAA)}$, and let $F_\gamma = (F, \gamma) \in \Ob(\CC^*)$ be an arbitrary
      expansion of $F$ (the weakly homogeneous locally finite object from the proof). As we have seen, expansions with unique restrictions
      induce group actions. Moreover, the action of $G = \Aut(F)$ on $U^{-1}(F)$ is continuous~\cite{masul-kpt}.
      Since $U^{-1}(F)$ is compact \cite{masul-kpt} there is an $F^* = (F, \phi^*) \in \overline{F_\gamma \cdot G}$
      such that $\overline{F^* \cdot G}$ is minimal with respect to inclusion. We then let
      $\AAA^* = \Age(F^*)$.

\section{The Main Result}
\label{srd-wap.sec.main}

We have now set up all the infrastructure necessary for the main result of the paper.
We shall say that $\CC$ is a \emph{category of finite objects} if $\CC$ has a skeleton $\SS$
with the following properties:
\begin{itemize}
  \item $\SS$ is a countable category,
  \item $\hom_\SS(A, B)$ is finite for all $A, B \in \SS$,
  \item for every $B \in \Ob(\SS)$ the set $\{A \in \Ob(\SS) : A \toSS B\}$ is finite, and
  \item every $A \in \Ob(\SS)$ is locally finite for $\SS$.
\end{itemize}

Whenever $\CC$ is a category of finite structures, we take $\CCfin$ to be whole of $\CC$.
Consequently, a precompact expansion $U : \CC^* \to \CC$ has the property that $U^{-1}(A)$ is finite for all $A \in \Ob(\CC)$.

\begin{THM}\label{srd-wap.thm.main}
  Let $\CC$ be a directed category of finite objects whose morphisms are mono.
  Then $\CC$ has finite Ramsey degrees if and only if
  there exists a category $\CC^*$ with the Ramsey property
  and a reasonable precompact expansion with unique restrictions $U : \CC^* \to \CC$ with the expansion property. (Such an expansion is usually referred to as a \emph{Ramsey expansion}.)
\end{THM}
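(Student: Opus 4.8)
The plan is to reduce Theorem~\ref{srd-wap.thm.main} to Theorem~\ref{crt.thm.akpt-main2} by manufacturing, out of the bare category $\CC$, an ambient category together with a weakly homogeneous locally finite object whose age spans (a skeleton of) $\CC$. The direction $(\Leftarrow)$ is the easy one and essentially reproduces the \Fraisse-case argument: if $U : \CC^* \to \CC$ is a reasonable precompact expansion with unique restrictions, with $\CC^*$ Ramsey and the expansion property, then a standard pigeonhole/compactness argument (as in \cite{KPT} and recalled via Theorem~\ref{crt.thm.akpt-main2}) shows each $A \in \Ob(\CC)$ has finite Ramsey degree bounded by $|U^{-1}(A)|$, which is finite by precompactness since $\CCfin = \CC$. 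So the content is in $(\Rightarrow)$.

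For $(\Rightarrow)$, assume $\CC$ has finite Ramsey degrees. First I would pass to a skeleton $\SS$ of $\CC$, which exists and is countable by hypothesis; note $\SS$ is directed and has finite Ramsey degrees (Ramsey degrees are isomorphism invariants), so by Theorem~\ref{srd-wap.thm.srd=>wap} $\SS$ has the weak amalgamation property. Moreover $\SS$ is weakly dominated by itself (a countable subcategory), so $\SS$ is a weak \Fraisse\ category. By Theorem~\ref{srd-wap.thm.kubis-weak-fraisse}(a) there is a weak \Fraisse\ sequence $W$ in $\SS$, and by part~(c), viewing $W$ as an object $F$ of $\sigma_0\SS$, the object $F$ is weakly homogeneous for $\SS$ (embedded via $J$). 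Now set $\DD = \sigma_0\SS$ and $\DDfin = J(\SS)$; by Lemma~\ref{srd-wap.lem.monos} all morphisms of $\DD$ are mono, so (C1) holds, and (C2), (C3) hold because $\SS$ is countable with finite hom-sets, while (C5) is the third bullet in the definition of category of finite objects, transported along $J$. Condition (C4) — every object of $\sigma_0\SS$ receives a morphism from some object of $J(\SS)$ — holds because every object of $\sigma_0\SS$ is a colimit of a sequence in $\SS$, hence receives the colimit cocone leg from $\overline{X_0} = J(X_0)$. The remaining point is that $F$ is \emph{locally finite} for $\DDfin$: this is where the fourth bullet of "category of finite objects" (every $A \in \Ob(\SS)$ is locally finite for $\SS$) gets used, together with the fact that in $\sigma_0\SS$ pushout-type data among objects of $J(\SS)$ inside $F$ can be traced back, through the colimit presentation of $F$, to finite stages of the sequence $W$, at which local finiteness of the relevant object of $\SS$ applies.

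With all of this in place, Theorem~\ref{crt.thm.akpt-main2} applies to $(\DD, \DDfin, F)$: since $\AAA := $ the full subcategory of $\DDfin$ spanned by $\Age(F)$ is (isomorphic to) $\SS$ itself — because the weak \Fraisse\ sequence $W$ is universal, so every object of $\SS$ embeds into $F$ — and $\AAA \cong \SS \cong \CC$ has finite Ramsey degrees, condition (1) of Theorem~\ref{crt.thm.akpt-main2} holds, yielding a reasonable precompact expansion with unique restrictions $U : \DD^* \to \DD$ and a directed full subcategory $\AAA^* \subseteq \DD^*$ with the Ramsey property such that $\restr{U}{\AAA^*} : \AAA^* \to \AAA$ has the expansion property. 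Finally, transporting along the equivalence $\SS \simeq \CC$ and restricting $U$ to $\AAA^*$ over $\AAA$ produces the desired expansion $\CC^* \to \CC$; precompactness here means $U^{-1}(A)$ is finite for every $A$, as noted after the definition of "category of finite objects". I expect the main obstacle to be verifying that the weak \Fraisse\ limit $F$ built inside $\sigma_0\SS$ is locally finite for $J(\SS)$ — unwinding the $\sigma_0$-construction (transformations modulo the congruence $\approx$) carefully enough to reduce the two defining diagrams of local finiteness to finite stages of the sequence $W$, and then invoking the hypothesis that every object of $\SS$ is locally finite for $\SS$, is the delicate bookkeeping step; the rest is assembling quotable results.
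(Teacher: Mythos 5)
Your proposal is correct and follows essentially the same route as the paper: pass to the countable skeleton $\SS$, use Theorem~\ref{srd-wap.thm.srd=>wap} to get weak amalgamation, build the weak \Fraisse\ sequence $W$ inside $\DD = \sigma_0\SS$ with $\DDfin = J(\SS)$, verify (C1)--(C5) and the weak homogeneity and local finiteness of $W$, invoke Theorem~\ref{crt.thm.akpt-main2}, and transport back to $\CC$ along the skeleton equivalence (the paper does this last step via a pullback of $\CC \to \SS \leftarrow \SS^*$). The one step you flag as delicate --- local finiteness of $W$ for $J(\SS)$ --- is indeed where the paper spends most of its effort, and the strategy you describe (reduce both defining diagrams to a finite stage $W_k$ of the sequence and apply local finiteness in $\SS$ there) is exactly what the paper carries out.
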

\begin{proof}
  $(\Leftarrow)$ See \cite{masul-kpt}.

  $(\Rightarrow)$
  Since $\CC$ is a category of finite objects it has a skeleton $\SS$ with the properties listed above.
  
  \medskip

  Step 1. Let us first show that
  there exists a category $\SS^*$ with the Ramsey property
  and a reasonable expansion with unique restrictions $V : \SS^* \to \SS$ with the expansion property.

  The category $\SS$ is clearly directed, so the fact that it has finite Ramsey degrees
  ensures by Theorem~\ref{srd-wap.thm.srd=>wap} that it also has the weak amalgamation property.
  Since $\SS$ is trivially weakly dominated by itself, it follows that
  $\SS$ is a weak amalgamation category and it has a weak \Fraisse\ sequence $W = (W_n, w_n^m)_{n \le m \in \omega}$
  (Theorem~\ref{srd-wap.thm.kubis-weak-fraisse}~$(a)$).
  Let $\DD = \sigma_0\SS$ and let $\DDfin = J(\SS)$. Let us show that (C1) -- (C5) are satisfied.

  (C1) Morphisms in $\DD$ are mono by Lemma~\ref{srd-wap.lem.monos}

  (C2) $\Ob(\DDfin) = \Ob(J(\SS))$ is a set because $\Ob(\SS)$ is a set by the assumption (actually, a countable one).

  (C3) and (C5) follow from the fact that $\SS$ is a skeleton of a category of finite objects.

  (C4) take any $X = (X_n, x_n^m)_{n \le m \in \omega} \in \Ob(\DD)$ and note that there is a morphism
       $\overline{X_0} \to X$ given by
       \begin{center}
         \begin{tikzcd}
           X_0 \arrow[r, "\id"] \arrow[d, "\id"'] & X_0 \arrow[r, "\id"] \arrow[d, "x_0^1"'] & X_0 \arrow[r, "\id"] \arrow[d, "x_0^2"'] & \cdots\\
           X_0 \arrow[r, "x_0^1"'] & X_1 \arrow[r, "x_1^2"'] & X_2 \arrow[r, "x_2^3"'] & \cdots
         \end{tikzcd}
       \end{center}

  It is easy to see that $W$ as an object of $\DD$ is universal for $J(\SS)$. Moreover, $W$  is weakly homogeneous for $J(\SS)$
  by Theorem~\ref{srd-wap.thm.kubis-weak-fraisse}~$(c)$. Let us show that $W$ is locally finite for~$J(\SS)$.

  Take any $A, B \in \Ob(\SS)$ and arbitrary morphisms $(E, \epsilon) / \Boxed{\approx} \in \hom_\DD(\overline A, W)$,
  $(F, \phi) / \Boxed{\approx} \in \hom_\DD(\overline B, W)$. Without loss of generality we may assume that
  the transformations $(E, \epsilon)$ and $(F, \phi)$ are chosen so that $\epsilon(0) = \phi(0)$.
  Let $k = \epsilon(0) = \phi(0)$. Then the fact that every object in $\SS$ is locally finite for $\SS$ implies
  that there is a $D \in \Ob(\SS)$ and morphisms $p \in \hom_\SS(A, D)$, $q \in \hom_\SS(B, D)$ and
  $r \in \hom_\SS(D, W_k)$ such that
  \begin{center}
    \begin{tikzcd}
      D \arrow[rr, "r"] & & W_k & &  \\
      & A \arrow[ul, "p"] \arrow[ur, "E_0"' near start] & & B \arrow[ulll, "q" near start] \ar[ul, "F_0"']
    \end{tikzcd}
  \end{center}
  and $D$ is the ``smallest'' such object in the sense that for every other contender there is a morphism from $D$ into the contender
  such that everything commutes.

  Let $(G, \gamma)$ be the transformation $\overline D \to W$ there $\gamma : \omega \to \omega : n \mapsto n + k$
  and $G_n \in \hom_\SS(D, W_{n+k})$ is given by $G_n = w_k^{k+n} \cdot r$:
  \begin{center}
    \begin{tikzcd}
      & & D \arrow[r, "\id"] \arrow[d, "G_0=r"'] & D \arrow[r, "\id"] \arrow[d, "G_1"'] & D \arrow[r, "\id"] \arrow[d, "G_2"'] & \cdots\\
      W_0 \arrow[r] & \cdots \arrow[r] & W_k \arrow[r, "w_k^{k+1}"'] & W_{k+1} \arrow[r, "w_{k+1}^{k+2}"'] & W_{k+2} \arrow[r, "w_{k+2}^{k+3}"'] & \cdots
    \end{tikzcd}
  \end{center}
  Let us show that $(G, \gamma) \cdot \overline p \approx (E, \epsilon)$. Take any $n \in \omega$ and let
  $m = 1 + \max\{k + n, \epsilon(n)\}$. Note that $E_n = w_{k}^{\epsilon(n)} \cdot E_0$
  because $(E, \epsilon)$ is a transformation $\overline A \to W$ (recall that $k = \epsilon(0)$):
  \begin{center}
    \begin{tikzcd}
      A \arrow[rr, "\id"] \arrow[d, "E_0"'] & & A \arrow[d, "E_n"] \\
      W_{k} \arrow[rr, "w_{k}^{\epsilon(n)}"'] & & W_{\epsilon(n)}
    \end{tikzcd}
  \end{center}
  Therefore,
    $w_{\epsilon(n)}^m \cdot E_n
    = w_{\epsilon(n)}^m \cdot  w_{k}^{\epsilon(n)} \cdot E_0 = w_k^m \cdot E_0
    = w_{k+n}^m \cdot w_k^{k+n} \cdot r \cdot p
    = w_{k+n}^m \cdot G_n \cdot p$.
  \begin{center}
    \begin{tikzcd}[column sep=large]
      & A \arrow[ddl, bend right, "E_0"'] \arrow[d, "p"'] \arrow[ddr, "E_n"] & & \\
      & D \arrow[dl, "r"'] \arrow[d, "G_n"] & & \\
      W_k \arrow[r, "w_k^{k+n}"'] & W_{k+n} \arrow[rr, bend right, "w_{k+n}^m"'] & W_{\epsilon(n)} \arrow[r, "w_{\epsilon(n)}^m"] & W_m
    \end{tikzcd}
  \end{center}
  The proof of $(G, \gamma) \cdot \overline q \approx (F, \phi)$ is analogous

  To complete the proof that $W$ is locally finite for $J(\SS)$ assume now that there is a contender $\overline C \in J(\SS)$
  with morphisms $\overline u / \Boxed{\approx} : \overline A \to \overline C$, 
  $\overline v / \Boxed{\approx} : \overline B \to \overline C$ and $(H, \theta) / \Boxed{\approx} : \overline C \to W$,
  where $u \in \hom_\SS(A, C)$ and $v \in \hom_\SS(B, C)$. Without loss of generality we may assume that the transformation
  $(H, \theta)$ was chosen so that $\theta(0) = k$. Then we have the following configuration in~$\SS$:
  \begin{center}
    \begin{tikzcd}[row sep=large]
      D \arrow[rr, "r"] & & W_k & &  C \arrow[ll, "H_0"'] \\
      & A \arrow[ul, "p" description] \arrow[ur, near start, "E_0"] \arrow[urrr, near end, bend left=7, "u" description] & & B \arrow[ulll, "q" description, near end, bend right=7] \ar[ul, near start, "F_0"'] \arrow[ur, "v"' description]
    \end{tikzcd}
  \end{center}
  By the choice of $D$ there is a morphism $s \in \hom_\SS(D, C)$ such that
  \begin{center}
    \begin{tikzcd}[row sep=large]
      D \arrow[rr, "r"] \arrow[rrrr, bend left=20, "s"] & & W_k & &  C \arrow[ll, "H_0"'] \\
      & A \arrow[ul, "p" description] \arrow[ur, near start, "E_0"] \arrow[urrr, near end, bend left=7, "u" description] & & B \arrow[ulll, "q" description, near end, bend right=7] \ar[ul, near start, "F_0"'] \arrow[ur, "v"' description]
    \end{tikzcd}
  \end{center}
  Note that everything still commutes when this diagram is transposed to $\DD$. The proof is straightforward. For example,
  $(H, \theta) \cdot \overline s \approx (G, \gamma)$ follows from
  \begin{center}
    \begin{tikzcd}[column sep=large]
      D \arrow[r, "\id"] \arrow[d, "s"] \arrow[dd, bend right, "G_0=r"'] & D \arrow[d, "s"'] \arrow[dd, bend left, "G_n"]\\
      C \arrow[r, "\id"] \arrow[d, "H_0"] & C \arrow[d, "H_n"'] \\
      W_k \arrow[r, "w_k^{k+n}"'] & W_{k+n}
    \end{tikzcd}
  \end{center}

  Therefore, $\DD$ is a locally small category and, $\DDfin = J(\SS) \cong \SS$ is a full subcategory of $\DD$ and (C1) -- (C5) hold.
  Moreover, $W$ is a weakly homogeneous locally finite object and $\Age(F)$ is the whole of $\DDfin \cong \SS$.
  Since $\SS$ has finite Ramsey degrees, Theorem~\ref{crt.thm.akpt-main2} ensures that
  there is a reasonable precompact expansion with unique restrictions $V : \DD^* \to \DD$ and
  a full subcategory $\SS^*$ of $\DD^*$ which is directed, has the Ramsey property and
  $\restr{V}{\SS^*} : \SS^* \to \SS$ has the expansion property.

  \medskip

  Step 2. Since $\SS$ is a skeleton of $\CC$ we can now easily expand $\SS^*$ to $\CC^*$ and $V : \SS^* \to \SS$ to $U : \CC^* \to \CC$
  as follows. Let $F : \CC \to \SS$ be a functor that takes $C \in \Ob(\CC)$ to the unique $S \in \Ob(\SS)$ such that
  $C \cong S$. Next, for each $C \in \Ob(\CC)$ fix an isomorphism $\eta_C : C \to F(C)$ and define $F$ on morphisms so that $F$
  takes $f : C \to D$ to $F(f) : F(C) \to F(D)$ where $F(f) = \eta_D \cdot f \cdot \eta_C^{-1}$. This turns $\eta$ into a
  natural transformation $\ID \to F$. Now take $\CC^*$ to be the pullback of $\CC \overset F\longrightarrow \SS \overset V\longleftarrow \SS^*$
  in the quasicategory of all locally small categories and functors between them,
  and let $U : \CC^* \to \CC$ be the corresponding functor in the pullback diagram:
  \begin{center}
    \begin{tikzcd}
      \CC^* \arrow[r, "F^*"] \arrow[d, "U"'] & \SS^* \arrow[d, "V"] \\
      \CC \arrow[r,"F"'] & \SS
    \end{tikzcd}
  \end{center}
  It is easy to see that $\SS^*$ is a skeleton of $\CC^*$ and that $C^* \cong F^*(C^*)$ for every $C^* \in \Ob(\CC^*)$.
  Therefore, $\CC^*$ has the Ramsey property and $U : \CC^* \to \CC$
  is a reasonable expansion with unique restrictions and with the expansion property.
\end{proof}

The above result easily specializes to classes of first-order structures.
Let $\Theta$ be a first-order signature (that is, a set of functional, relational and constant symbols)
and let $\KK$ be a class of finite $\Theta$-structures. Then $\KK$ can be treated as a category with embeddings as morphisms.
So, when we stipulate that a class of first-order structures has certain properties
we have introduced in the context of categories (Ramsey property, finite Ramsey degrees, \ldots) we have this interpretation in mind.
Historically, structural Ramsey phenomena we consider in this paper were first identified in classes of
first-order structures and were later transposed to the context of abstract categories. For historical reasons we shall, therefore, say that
a class of first-order structures $\KK$ has the \emph{joint embedding property} if for all $\calA, \calB \in \KK$ there is a $\calC \in \KK$
such that $\calA \hookrightarrow \calC \hookleftarrow \calB$.
Clearly, $\KK$ has the joint embedding property if and only if $\KK$ is directed as a category.

Let $\Theta^* \supseteq \Theta$ be a first-order signature that contains $\Theta$ and let
$\KK^*$ be a class of $\Theta^*$-structures. Let $\KK$ be a class of $\Theta$-structures and let $U : \KK^* \to \KK$
be the forgetful functor that forgets the extra structure from $\Theta^* \setminus \Theta$ so that
$U$ takes a structure from $\KK^*$ to its $\Theta$-reduct (and takes homomorphisms to themselves).
We then say that $\KK^*$ is an \emph{expansion of} $\KK$, and that it is a \emph{reasonable expansion with unique restrictions
and with the expansion property} if $U : \KK^* \to \KK$ is.

\begin{COR}
  Let $\Theta$ be a first-order signature and let $\KK$ be a class of finite $\Theta$-structures such that
  there are at most countably many pairwise nonisomorphic structures in $\KK$ and $\KK$ has the joint-embedding property.
  Then: $\KK$ has finite Ramsey degrees if and only if there exists a first-order signature $\Theta^* \supseteq \Theta$
  and a class $\KK^*$ of $\Theta^*$-structures such that $\KK^*$ has the Ramsey property and
  $\KK^*$ is a reasonable precompact expansion of $\KK$ with unique restrictions and with the expansion property.
\end{COR}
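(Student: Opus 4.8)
The plan is to derive this corollary directly from Theorem~\ref{srd-wap.thm.main} by verifying that a class $\KK$ of finite $\Theta$-structures with embeddings as morphisms falls within its hypotheses, and then translating the abstract conclusion back into the language of first-order expansions. First I would check the structural hypotheses of Theorem~\ref{srd-wap.thm.main}: embeddings are always mono in the category of structures, so the morphisms are mono; the joint-embedding property is exactly directedness, as already noted in the text preceding the corollary; and the countability assumption on $\KK$ (up to isomorphism) together with standard facts about finite structures of a fixed finite signature shows that $\KK$ has a countable skeleton $\SS$ with finite hom-sets, finitely many substructures of each object, and each object locally finite for $\SS$ (local finiteness here is automatic because in a class of finite structures one can always take the ``amalgam'' $D$ to be the actual substructure generated by the images, and the universal property is witnessed by the inclusion). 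Hence $\KK$ is a directed category of finite objects whose morphisms are mono, so Theorem~\ref{srd-wap.thm.main} applies.

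Next I would unwind the two directions. For the forward implication, assume $\KK$ has finite Ramsey degrees; Theorem~\ref{srd-wap.thm.main} yields a category $\CC^*$ with the Ramsey property and a reasonable precompact expansion with unique restrictions $U : \CC^* \to \KK$ with the expansion property. The remaining task is to realize this abstract expansion as a genuine first-order expansion, i.e.\ to produce $\Theta^* \supseteq \Theta$ and a class $\KK^*$ of $\Theta^*$-structures whose reduct functor is (isomorphic to) $U$. I would do this by the standard coding device: since the expansion is precompact, each $A \in \KK$ has only finitely many expansions $A^* \in U^{-1}(A)$, and reasonableness with unique restrictions means these expansions are compatible along embeddings in a coherent way; one introduces, for each isomorphism type, finitely many new relation symbols (or a single relation symbol of large enough arity together with a ``marker'' making the finitely many options distinguishable) and defines $\KK^*$ to be the class of $\Theta^*$-structures whose $\Theta$-reduct lies in $\KK$ and whose new relations encode one of the legitimate expansions. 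The unique-restrictions property guarantees that the $\Theta^*$-embeddings between members of $\KK^*$ are exactly the $\Theta$-embeddings that preserve the coding, so the reduct functor $U : \KK^* \to \KK$ is isomorphic to the abstract one, and therefore inherits the Ramsey property, reasonableness, precompactness, unique restrictions, and the expansion property. The converse direction is immediate: given such a $\KK^*$, the reduct functor is a reasonable precompact expansion with unique restrictions and the expansion property, so Theorem~\ref{srd-wap.thm.main} (the $(\Leftarrow)$ direction) gives that $\KK$ has finite Ramsey degrees.

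The main obstacle I expect is the faithful translation step in the forward direction — checking that the abstract category $\CC^*$ coming out of the pullback construction in Theorem~\ref{srd-wap.thm.main} can be presented, up to equivalence of categories over $\KK$, as an honest class of relational expansions, and in particular that the new symbols can be chosen uniformly (a fixed signature $\Theta^*$ rather than one growing with the structure). This is really a matter of organizing the finitely-many-expansions-per-object data into definable predicates; it is routine in the \Fraisse\ setting and the argument is essentially the same here because precompactness plus unique restrictions is exactly what makes the coding coherent, but it is the only place where one must actually say something rather than invoke a black box. Once this encoding is in place, the rest of the corollary is a direct reading-off of Theorem~\ref{srd-wap.thm.main}.
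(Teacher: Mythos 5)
Your proposal is correct and follows essentially the same route as the paper: verify that $\KK$ (with embeddings) is a directed category of finite objects with mono morphisms, invoke Theorem~\ref{srd-wap.thm.main}, and then realize the abstract expansion as a genuine relational expansion by introducing, for each isomorphism type, finitely many new relation symbols whose arity is the size of that type — which is exactly the paper's signature $\Theta' = \Theta \cup \{R_{m,j} : m \in \NN,\ 1 \le j \le t_m\}$ encoding the colorings $\hom(\calA_m,\calD)\to t_{\AAA}(\calA_m)$. The only (presentational) difference is that the paper re-traces the specific construction from Theorems~\ref{crt.thm.akpt-main2} and~\ref{srd-wap.thm.main} (the ambient class $\sigma_0\AAA$ of unions of chains, the coloring expansions, then renaming to spread from the skeleton to all of $\KK$) rather than coding a generic precompact expansion with unique restrictions, but the content of the coding step is the same.
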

\begin{proof}
  $(\Leftarrow)$: Immediate from Theorem~\ref{srd-wap.thm.main}.

  $(\Rightarrow)$: To show that this direction also follows from Theorem~\ref{srd-wap.thm.main} we have to show that
  the expansion constructed in Theorem~\ref{srd-wap.thm.main} can be performed in the realm of first-order structures.

  Let $\calA_m$, $m \in \NN$, be an enumeration of all representatives of isomorphism types in $\KK$ where
  $\calA_m$ is a $\Theta$-structure whose underlying set is $A_m = \{a_{m,1}, \ldots, a_{m, n_m}\}$, $n_m = |A_m|$.
  Clearly (with a slight abuse of terminology), $\AAA = \{\calA_m : m \in \NN\}$ is the skeleton of $\KK$.

  The first step in the proof of Theorem~\ref{srd-wap.thm.main} begins with the construction of the ambient category $\DD = \sigma_0\SS$ in which
  weakly homogeneous locally finite universal objects dwell. Since $\AAA$ is a set of finite $\Theta$-structures
  in the context of first-order structures we can take $\DD = \sigma_0\AAA$ to be the class of all structures isomorphic to the unions of countable chains in $\AAA$.
  Recall that the morphisms between sequences are defined so that a morphism from a sequence $X$ to a sequence $Y$
  corresponds uniquely to an embedding of the colimit of~$X$ into the colimit of~$Y$. Therefore, the ambient category $\DD$
  is a category whose objects are finite or countably infinite $\Theta$-structures which can be constructed as limits of
  countable chains in $\AAA$. Morphisms in $\DD$ are embeddings.

  Next, Theorem~\ref{crt.thm.akpt-main2} is invoked to produce a Ramsey expansion by adding additional structure
  to objects from $\DD$ so that for each $\calD \in \Ob(\DD)$ we add to $\Ob(\DD^*)$ all possible pairs
  $(\calD, (\delta_\calA)_{\calA \in \Ob(\AAA)})$ where $\delta_\calA : \hom(\calA, \calD) \to t_\AAA(\calA)$ is an arbitrary coloring.
  This particular construction specializes to first-order structures as follows (cf.~\cite{vanthe-finramdeg}).
  Let $t_\KK(\calA_m) = t_\AAA(\calA_m) = t_m \in \NN$ and let us expand $\Theta$ with countably many relational symbols $R_{m,j}$,
  $m \in \NN$, $1 \le j \le t_m$, where the arity of $R_{m,j}$ is $n_m = |A_m|$:
  $$
    \Theta' = \Theta \cup \{R_{m,j} : m \in \NN, 1 \le j \le t_m\}.
  $$
  Now, for each $\Theta$-structure $\calD = (D, \Theta^\calD) \in \Ob(\DD)$ we add to $\Ob(\DD^*)$ all possible $\Theta'$-structures
  $$
    \calD^* = (D, \Theta^\calD \cup \{R^{\calD^*}_{m,j} : m \in \NN, 1 \le j \le t_m\})
  $$
  where, for each $m \in \NN$:
  \begin{itemize}
    \item $R^{\calD^*}_{m,1}, \ldots, R^{\calD^*}_{m,t_{m}}$ are pairwise disjoint, some of them possibly empty;
    \item if $(d_1, \ldots, d_{n_m}) \in R^{\calD^*}_{m,j}$ then the map
          $\left(\begin{smallmatrix}a_{m,1} & \ldots & a_{m, n_m} \\ d_1 & \ldots & d_{n_m}\end{smallmatrix}\right)$
          is an embedding $\calA_m \hookrightarrow \calD$, $1 \le j \le t_m$; and
    \item if $\left(\begin{smallmatrix}a_{m,1} & \ldots & a_{m, n_m} \\ d_1 & \ldots & d_{n_m}\end{smallmatrix}\right)$
          is an embedding $\calA_m \hookrightarrow \calD$ then
          $(d_1, \ldots, d_{n_m}) \in R^{\calD^*}_{m,j}$ for some $1 \le j \le t_m$.
  \end{itemize}
  Morphisms in $\DD^*$ are embeddings.
  By Theorem~\ref{crt.thm.akpt-main2} the obvious forgetful functor $V : \DD^* \to \DD$ which takes a $\Theta'$ structure to its
  $\Theta$-reduct is a reasonable precompact expansion with unique restrictions and there is a full subcategory $\AAA^*$ of $\DD^*$ which is directed,
  has the Ramsey property and $\restr{V}{\AAA^*} : \AAA^* \to \AAA$ has the expansion property.

  The final step in the proof of Theorem~\ref{srd-wap.thm.main} consists of spreading the construction that we performed on $\AAA$ to
  the whole of $\KK$, and in the context of first-order structures this reduces to constructing isomorphic copies of elements of
  $\AAA^*$ simply by renaming elements.
\end{proof}

Moreover, the dual of Theorem~\ref{srd-wap.thm.main} holds as well.
We shall say that $\CC$ is a \emph{category of finite quotients} if $\CC$ has a skeleton $\SS$
with the following properties:
\begin{itemize}
  \item $\SS$ is a countable category,
  \item $\hom_\SS(A, B)$ is finite for all $A, B \in \SS$,
  \item for every $B \in \Ob(\SS)$ the set $\{A \in \Ob(\SS) : B \toSS A\}$ is finite, and
  \item every $A \in \Ob(\SS)$ is dually locally finite for $\SS$.
\end{itemize}
Here, $A \in \Ob(\SS)$ is \emph{dually locally finite for $\SS$} if $A \in \Ob(\SS)$ is locally finite for $\SS^\op$.
Continuing in the same fashion, we say that $\CC$ is \emph{dually directed} if $\CC^\op$ is directed;
that $\CC$ has \emph{small dual Ramsey degrees} if $\CC^\op$ has small Ramsey degrees;
that $\CC$ has \emph{dual Ramsey property} if $\CC^\op$ has the Ramsey property;
that an expansion $U : \CC \to \DD$ is \emph{dually reasonable} if $U : \CC^\op \to \DD^\op$ is reasonable;
that an expansion $U : \CC \to \DD$ has \emph{unique quotients} if $U : \CC^\op \to \DD^\op$ has unique restrictions; and
that an expansion $U : \CC \to \DD$ has \emph{the dual expansion property} if $U : \CC^\op \to \DD^\op$ has the expansion property.

\begin{COR}
  Let $\CC$ be a dually directed category of finite quotients whose morphisms are epi.
  Then $\CC$ has finite dual Ramsey degrees if and only if
  there exists a category $\CC^*$ with the dual Ramsey property
  and a dually reasonable precompact expansion with unique quotients $U : \CC^* \to \CC$ with the dual expansion property.
\end{COR}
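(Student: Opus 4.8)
The strategy is to reduce the dual statement to Theorem~\ref{srd-wap.thm.main} by passing to the opposite category. This is the most natural route because every notion appearing in the corollary has been \emph{defined} as its undualized counterpart applied to $\CC^\op$: ``dually directed'' means $\CC^\op$ is directed, ``finite dual Ramsey degrees'' means $\CC^\op$ has finite Ramsey degrees, ``morphisms are epi'' in $\CC$ means morphisms are mono in $\CC^\op$, ``dually reasonable'' means reasonable on $\CC^\op$, ``unique quotients'' means unique restrictions on $\CC^\op$, and ``dual expansion property'' means the expansion property on $\CC^\op$. So the entire statement is literally Theorem~\ref{srd-wap.thm.main} applied to $\CC^\op$, \emph{provided} we can check that $\CC^\op$ is a category of finite objects in the sense defined just before Theorem~\ref{srd-wap.thm.main}.

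The first step is therefore to verify that if $\CC$ is a category of finite quotients, then $\CC^\op$ is a category of finite objects. Unwinding the definitions: $\CC$ has a skeleton $\SS$ which is countable, with finite hom-sets, such that for every $B \in \Ob(\SS)$ the set $\{A : B \toSS A\}$ is finite, and every $A \in \Ob(\SS)$ is dually locally finite for $\SS$, i.e.\ locally finite for $\SS^\op$. Then $\SS^\op$ is a skeleton of $\CC^\op$ (a full subcategory is a skeleton iff its opposite is, since isomorphism and the spanning condition are self-dual); it is countable with finite hom-sets since $\SS$ is; the condition ``for every $B \in \Ob(\SS^\op)$ the set $\{A \in \Ob(\SS^\op) : A \to_{\SS^\op} B\}$ is finite'' is exactly ``for every $B \in \Ob(\SS)$ the set $\{A : B \toSS A\}$ is finite''; and ``every $A \in \Ob(\SS^\op)$ is locally finite for $\SS^\op$'' is exactly the dual local finiteness hypothesis on $\SS$. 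Hence $\CC^\op$ is a directed category of finite objects whose morphisms are mono.

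The second step is to apply Theorem~\ref{srd-wap.thm.main} to $\CC^\op$: since $\CC^\op$ has finite Ramsey degrees (this is the hypothesis that $\CC$ has finite dual Ramsey degrees, once we note $(\CC^\op)^\op = \CC$), there is a category $\DD$ with the Ramsey property and a reasonable precompact expansion with unique restrictions $U_0 : \DD \to \CC^\op$ with the expansion property, and conversely. Setting $\CC^* = \DD^\op$ and $U = U_0^\op : \CC^* \to \CC$, we translate each property back through the dictionary: ``$\DD$ has the Ramsey property'' becomes ``$\CC^*$ has the dual Ramsey property''; ``$U_0$ is a reasonable precompact expansion with unique restrictions'' becomes ``$U$ is a dually reasonable precompact expansion with unique quotients'' (precompactness is self-dual, being about the cardinality of fibers of the functor on objects, which is unchanged by passing to opposites); and ``$U_0$ has the expansion property'' becomes ``$U$ has the dual expansion property''. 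The converse direction follows by the same translation.

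I do not expect any genuine obstacle here; the only point requiring a modicum of care is confirming that the three self-dual notions are indeed self-dual, namely: that a subcategory is a skeleton of $\CC$ iff its opposite is a skeleton of $\CC^\op$; that precompactness of an expansion is invariant under passing to opposite categories (since it concerns only the cardinality of $U^{-1}(A) \subseteq \Ob(\CC^*)$, and taking opposites leaves objects and the object-map of $U$ untouched); and that ``all morphisms are epi'' in $\CC$ is the same as ``all morphisms are mono'' in $\CC^\op$ (immediate from the definitions of epi and mono by left/right cancellability). With these bookkeeping verifications in hand, the corollary is a formal consequence of Theorem~\ref{srd-wap.thm.main}, and the proof is essentially the single sentence ``apply Theorem~\ref{srd-wap.thm.main} to $\CC^\op$ and dualize.''
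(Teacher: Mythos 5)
Your proposal is correct and is precisely the argument the paper intends: the corollary is stated without proof because every dual notion is defined by passing to the opposite category, so the statement is Theorem~\ref{srd-wap.thm.main} applied to $\CC^\op$, exactly as you argue. Your bookkeeping checks (that $\CC^\op$ is a category of finite objects with skeleton $\SS^\op$, and that precompactness, the skeleton condition, and epi/mono dualize correctly) are the right ones and are all that is needed.
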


\subsection*{Acknowledgements}

The authors would like to thank Adam Barto\v s for helping to streamline some arguments and Gianluca Basso for finding some mistakes in an earlier draft.

The first author was supported by the Science Fund of the Republic of Serbia, Grant No.~7750027:
Set-theoretic, model-theoretic and Ramsey-theoretic phenomena in mathematical structures: similarity and diversity -- SMART.


\end{document}